\newtheorem{theorem}{Theorem}[section]
\newtheorem{proof}{Proof}
\def\hs{\hskip0.04cm'}
\def\dz{,\kern-0.1em ,}
\def\dd#1{{#1\kern-0.4em\char"16\kern-0.1em}}
\def\D#1{{\raise0.2ex\hbox{-}\kern-0.42em #1}}
\def\la{\lambda}
\def\d{\delta}
\def\a{\alpha}
\def\b{\beta}
\def\G{{\rm\Gamma}}
\def\g{\gamma}
\def\lesl{\leqslant}
\def\z{\zeta}
\def\Re{{\rm Re\,}}
\def\rmd{\mathrm{d}}
\newcommand{\arctg}{\mathop{\rm arctg}}
\title{On closed forms of some trigonometric series}
\author{Slobodan B. Tri\v ckovi\'c\\
\small University of Ni\v s, Serbia, Department of Mathematics, e-mail: sbt@junis.ni.ac.rs\\
Miomir S. Stankovi\'c\\
\small Mathematical Institute of Serbian Academy of\\
\small Sciences and Arts, Belgrade, Serbia, e-mail: miomir.stankovic@gmail.com}
\date{\today}
\begin{document}

\maketitle

\begin{abstract}
We have derived alternative closed-form formulas for the trigonometric series
over sine or cosine functions when the immediate replacement of the parameter appearing in
the denominator with a positive integer gives rise to a singularity. By applying
the Choi-Srivastava theorem, we reduce these trigonometric series to expressions over Hurwitz's
zeta function derivative.
\end{abstract}

Keywords: Dirichlet $\eta,\la,\b$ functions, Riemann's zeta function, Hurwitz's zeta function, Harmonic numbers.

Subject class: Primary 11M41; Secondary 33B15.

\markright{On closed forms of some trigonometric series}

\section{Preliminaries}\label{s1}

All particular cases of the general summation formula for the trigonometric series
\begin{equation}\label{trig}
T_\a^f=\sum_{n=1}^\infty\frac{(s)^{n-1}f((an-b)x)}{(an-b)^\a},\quad\a>0,
\end{equation}
we derived in \cite{itsf-19-6} and expressed \eqref{trig}
via a power series
\begin{equation}\label{cosseries}
T_\a^f=\frac{c\pi x^{\a-1}}{2\G(\a)f(\frac{\pi\a}2)}
+\sum_{k=0}^\infty\frac{(-1)^k F(\a-2k-\d)}{(2k+\d)!}x^{2k+\d},
\end{equation}
and one can obtain each of them by looking up in Table I, p. \pageref{table1} by taking the corresponding parameters,
where $F$ is Riemann's $\z$ function
initially defined by the series
\begin{equation}\label{z}
\z(s)=\sum_{k=1}^\infty\frac1{k^s},\quad\Re s>1.
\end{equation}
or Dirichlet $\eta,\la,\b$ functions defined as follows
\begin{equation}\label{eta-lambda-beta}
\eta(s)=\sum_{n=1}^\infty\frac{(-1)^{n-1}}{n^s},\quad\la(s)=\sum_{n=1}^\infty\frac1{(2n-1)^s},
\quad\b(s)=\sum_{n=1}^\infty\frac{(-1)^{n-1}}{(2n-1)^s},
\end{equation}

Riemann's $\z$ function satisfies the functional equation \cite{apostol}
\begin{equation}\label{z-fun-eq}
\z(1-s)=\frac{2\z(s)}{(2\pi)^s}\G(s)\cos\frac{\pi s}2.
\end{equation}
where $\G(s)$ is the gamma function
\begin{equation}\label{gamma-f}
\G(s)=\int_0^\infty x^{s-1}e^{-x}\,\rmd x\quad(\Re s>0)
\end{equation}
introduced by Euler. Rewriting the integral \eqref{gamma-f} as follows
\begin{align*}
\G(s)=\int_0^\infty x^{s-1}e^{-x}\rmd x
&=\sum_{n=0}^\infty\frac{(-1)^n}{n!}\int_0^1 x^{s+n-1}\,\rmd x
+\int_1^\infty x^{s-1}e^{-x}\rmd x\\
&=\sum_{n=0}^\infty\frac{(-1)^n}{n!}\frac1{s+n}+\int_1^\infty x^{s-1}e^{-x}\rmd x
\end{align*}
provides the analytic continuation of the gamma function for all complex numbers,
except for integers less than or equal to zero. So, the functional equation \eqref{z-fun-eq}
extends $\z(s)$ to the whole complex plane except for $s=1$, where it has a pole, since $s=0$
appears as a singularity of the gamma function. For $s=2n+1,\,n\in\mathbb N$, in \eqref{z-fun-eq},
we find
\begin{equation}\label{zeta2n-zero}
\z(-2n)=\frac{2(2n)!}{(2\pi)^{2n}}\z(2n+1)\cos\frac\pi2(2n+1)=0.
\end{equation}


The integral representation of the $\eta$ function
\begin{equation}\label{eta-int}
\eta(s)=\sum_{n=1}^\infty\frac{(-1)^{n-1}}{n^s}=\frac1{\G(s)}\int_0^\infty\frac{x^{s-1}}{e^x+1}\,\rmd x
\end{equation}
define it as analytical function for $\Re s>0$, but by using its functional equation
\begin{equation}\label{eta-ext}
\eta(-s)=\frac{2^{s+1}-1}{2^{s}-1}\frac s{\pi^{s+1}}\sin\frac{\pi s}2\G(s)\,\eta(s+1)
\end{equation}
and by Euler's reflection formula \cite{olver}
\begin{equation}\label{euler-refl}
\G(1-s)\G(s)=\frac\pi{\sin\pi s},\quad s\notin\mathbb Z,
\end{equation}
the eta function extends to the whole complex plane, and \eqref{eta-ext} immediately gives rise
to $\eta(-2n)=0$, $n\in\mathbb N$, which we can also calculate from its connection to the zeta function.


Given \eqref{eta-lambda-beta}, we conclude that Dirichlet's lambda function,
as the zeta function, is analytical for all complex numbers except for $s=1$, where it has a pole.
In addition, $\la(-2n)=0$, $n\in\mathbb N$.


The integral representation of the $\b$ function
\begin{equation}\label{beta-int}
\b(s)=\sum_{n=1}^\infty\frac{(-1)^{n-1}}{(2n-1)^s}
=\frac1{\G(s)}\int_0^\infty\frac{x^{s-1}e^{-x}}{1+e^{-2x}}\rmd x,
\end{equation}
defines it as an analytical function for $\Re s>0$, but by means of the functional equations
\begin{equation}\label{beta-ext}
\b(1-s)=\Big(\frac2\pi\Big)^s\sin\frac{\pi s}2\G(z)\b(s)
\end{equation}
$\b$ extends to the whole complex plane. By virtue of \eqref{euler-refl},
the equation \eqref{beta-ext} can be rewritten as follows
\begin{equation}\label{beta-z}
\b(s)=\Big(\frac\pi2\Big)^{s-1}\b(1-s)\G(1-s)\cos\frac{\pi s}2,
\end{equation}
whence we find $\b(-2n+1)=0$, $n\in\mathbb N$.


Thus, owing to 
$$
\z(-2n)=\eta(-2n)=\la(-2n)=\b(-2n+1)=0,\quad n\in\mathbb N
$$
the right-hand side series in \eqref{cosseries} truncates. Thereby we obtained
in \cite{itsf-19-6} one type of closed-form formulas. So, for $\a=2m+p-1$ where
$m\in\mathbb N$ and $p=0$ or $p=1$, we have all these cases comprised by the general
formula \cite{itsf-19-6}
\begin{align*}
T_{2m+p-1}^f=\sum_{n=1}^\infty&\frac{(s)^{n-1}\,f(an-b)x)}{(an-b)^{2m+p-1}}
=\frac{c\pi x^{2m+p-2}}{2(2m+p-2)!f(m\pi+\frac\pi2(p-1))}\\
&+\sum_{k=0}^{m+p-1}\frac{(-1)^k F(2m+p-1-2k-\d)}{(2k+\d)!}x^{2k+\d},
\end{align*}
and we can obtain each of them from Table I by choosing the corresponding parameters.

{\tabcolsep 4pt
\begin{center}
\begin{tabular}{|c|c|c|c|c|c|c|c|c|} \multicolumn{9}{c}
{Table I: Particular and closed-form cases of \eqref{cosseries}}\label{cl-form}\\ \hline
 $a$ & $b$ & $s$ & $c$ & $F$ & $f$ & $\d$ & $p$  & Convergence region\\ \hline
   &   &     &     &      & $\sin$ & $1$ & $0$  & \\ \cline{6-8}
   &   & \raisebox{1.5ex}[0pt]{$1$} & \raisebox{1.5ex}[0pt]{$1$} &
   \raisebox{1.5ex}[0pt]{$\z$} & $\cos$ & $0$ & $1$  & \raisebox{1.5ex}[0pt]{$0<x<2\pi$}\\ \cline{3-9}
   &   &     &     &      & $\sin$ & $1$ & $0$  & \\ \cline{6-8}
 \raisebox{4.5ex}[0pt]{$1$} & \raisebox{4.5ex}[0pt]{$0$} & \raisebox{1.5ex}[0pt]{$-1$} & \raisebox{1.5ex}[0pt]{$0$} &
 \raisebox{1.5ex}[0pt]{$\eta$} & $\cos$ & $0$ & $1$  & \raisebox{1.5ex}[0pt]{$-\pi<x<\pi$}\\ \hline
 &   &     &     &      & $\sin$ & $1$ & $0$  & \\ \cline{6-8}
   &   &  \raisebox{1.5ex}[0pt]{$1$} & \raisebox{1.5ex}[0pt]{$\frac 12$} &
   \raisebox{1.5ex}[0pt]{$\la$} & $\cos$ & $0$ & $1$ & \raisebox{1.5ex}[0pt]{$0<x<\pi$}\\ \cline{3-9}
   &   &     &     &      & $\sin$ & $1$ & $1$  & \\ \cline{6-8}
 \raisebox{4.5ex}[0pt]{$2$} & \raisebox{4.5ex}[0pt]{$1$} & \raisebox{1.5ex}[0pt]{$-1$} &
  \raisebox{1.5ex}[0pt]{$0$} & \raisebox{1.5ex}[0pt]{$\b$} &
 $\cos$ & $0$ & $0$ & \raisebox{1.5ex}[0pt]{$-\frac\pi 2<x<\frac\pi2$}\\ \hline
\end{tabular}
\end{center}}\label{table1}

\section{Alternative closed-form formulas}\label{s2}

For $a=1,\ b=0,\ s=1$ in \eqref{trig}, and putting $f=\sin$, then $f=\cos$, we have
\begin{equation}\label{polylogarithm1}
\begin{split}
\sum_{n=1}^\infty\frac{\sin nx}{n^\a}
&=\frac{\pi x^{\a-1}}{2\G(\a)\sin\frac\pi2\a}
+\sum_{k=0}^\infty\frac{(-1)^k\z(\a-2k-1)}{(2k+1)!}\,x^{2k+1},\\
\sum_{n=1}^\infty\frac{\cos nx}{n^\a}
&=\frac{\pi x^{\a-1}}{2\G(\a)\cos\frac\pi2\a}
+\sum_{k=0}^\infty\frac{(-1)^k\z(\a-2k)}{(2k)!}\,x^{2k},\quad 0<x<2\pi,
\end{split}
\end{equation}
where $\z$ presents Riemann's zeta function.

However, by taking limits $\a\to2m$ and $\a\to2m-1$ in the first and
the second formula of \eqref{polylogarithm1} respectively, one encounters singularities of the $\z$ function,
so we have to act differently.

\begin{theorem} Letting $\a\to2m$ in the first formula of \eqref{polylogarithm1},
one brings the sine series in closed form
\begin{equation}\label{lim-value1}
\sum_{n=1}^\infty\frac{\sin nx}{n^{2m}}
=\frac{(-1)^{m}(2\pi)^{2m-1}}{(2m-1)!}\Big(\z{\hskip0.2mm}'\Big(1-2m,1-\frac x{2\pi}\Big)
-\z{\hskip0.2mm}'\Big(1-2m,\frac x{2\pi}\Big)\Big),
\end{equation}
where $0<x<2\pi$. The sine series presents the Clausen function $\textrm{\rm Cl}_{2m}$ \rm{\cite{itsf-34-6}}.
\end{theorem}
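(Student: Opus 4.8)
The plan is to carry out the limit $\a\to2m$ not in the power-series form \eqref{polylogarithm1} directly, where it shows up as a delicate cancellation, but in an equivalent representation of the sine series through the Hurwitz zeta function supplied by the Choi--Srivastava theorem (a rearrangement of Hurwitz's formula). Writing $x=2\pi a$ with $0<a<1$, that theorem expresses, for $\a$ in the convergence strip,
\[
\sum_{n=1}^\infty\frac{\sin nx}{n^\a}
=\frac{(2\pi)^\a}{4\G(\a)\sin\frac{\pi\a}2}
\Big(\z(1-\a,a)-\z(1-\a,1-a)\Big).
\]
The value of this form is that the offending factor is isolated: as $\a\to2m$ the denominator $\sin\frac{\pi\a}2\to\sin m\pi=0$, while the bracketed difference also tends to $0$, so the right-hand side is a genuine $0/0$ indeterminate form rather than a sum of two separately divergent pieces. (Equivalently, in \eqref{polylogarithm1} the blow-up of the prefactor is cancelled by the simple pole $\z(\a-2m+1)\to\z(1)$ sitting in the $k=m-1$ summand.)

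To resolve the indeterminacy I would apply L'Hospital's rule in $\a$. First I must confirm the numerator vanishes at $\a=2m$: from $\z(1-2m,a)=-B_{2m}(a)/(2m)$ together with the Bernoulli-polynomial symmetry $B_{2m}(1-a)=B_{2m}(a)$ one gets $\z(1-2m,a)-\z(1-2m,1-a)=0$. Differentiating numerator and denominator with respect to $\a$, the chain rule turns $\frac{d}{d\a}\z(1-\a,a)$ into $-\z'(1-\a,a)$ (with $\z'$ the derivative in the first argument), so the numerator's derivative at $\a=2m$ is $\z'(1-2m,1-a)-\z'(1-2m,a)$, while the denominator's derivative is $\frac\pi2\cos\frac{\pi\a}2$, equal to $\frac\pi2(-1)^m$ there. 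Combining this ratio with the prefactor evaluated at $\a=2m$, namely $(2\pi)^{2m}/\big(4(2m-1)!\big)$, and collecting constants via $\frac{2(-1)^m}{\pi}\cdot\frac{(2\pi)^{2m}}{4(2m-1)!}=\frac{(-1)^m(2\pi)^{2m-1}}{(2m-1)!}$ yields exactly \eqref{lim-value1} after setting $a=x/(2\pi)$.

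The main obstacle is really the justification that precedes the computation: establishing (or correctly invoking) the Choi--Srivastava/Hurwitz representation on the whole strip where the limit is taken, and checking that $\a\to2m$ may be passed through it, i.e. that both sides are analytic in $\a$ near $2m$ so that the $0/0$ limit reproduces the true value of the continued series. The remaining steps -- the Bernoulli symmetry forcing the numerator to vanish, the sign bookkeeping in the chain rule, and the constant collection -- are routine. Finally, the identification with the Clausen function is immediate, since $\mathrm{Cl}_{2m}(x)=\sum_{n\ge1}\sin(nx)/n^{2m}$ is the definition of the even-index Clausen function.
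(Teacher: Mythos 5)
Your proof is correct, but it is organized differently from the paper's, so a comparison is worth recording. Both arguments rest on the same key identity --- Hurwitz's formula \eqref{hurwitz-formula} antisymmetrized under $a\mapsto1-a$, which kills the cosine contributions and isolates the sine series --- but they extract the conclusion in dual ways. The paper takes no limit at all: it differentiates \eqref{hurwitz-formula} with respect to $s$, evaluates at $s=2m$ for $a=\frac{x}{2\pi}$ and for $a=1-\frac{x}{2\pi}$, and subtracts; the series weighted by $\psi(2m)-\log 2\pi$ and by $\log n$ are identical in the two equations and cancel, leaving $\zeta'(1-2m,1-\frac{x}{2\pi})-\zeta'(1-2m,\frac{x}{2\pi})$ equal to a constant multiple of the sine series. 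You instead first solve the antisymmetrized formula itself (before differentiating) for the sine series at general $\alpha$,
\[
\sum_{n=1}^\infty\frac{\sin nx}{n^\alpha}
=\frac{(2\pi)^\alpha}{4\,\Gamma(\alpha)\sin\frac{\pi\alpha}2}
\Big(\zeta\big(1-\alpha,\tfrac{x}{2\pi}\big)-\zeta\big(1-\alpha,1-\tfrac{x}{2\pi}\big)\Big),
\]
and then genuinely perform the limit $\alpha\to2m$ by L'Hospital; the differentiation in $\alpha$ that L'Hospital forces is exactly the paper's differentiation in $s$, so the two computations agree term for term, and your constants do come out to $\frac{(-1)^m(2\pi)^{2m-1}}{(2m-1)!}$ as required. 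What your route buys is that it implements the theorem's statement literally (``letting $\alpha\to2m$'') and confines the singularity to the single factor $\sin\frac{\pi\alpha}2$; what it costs is two justifications the paper never needs: the vanishing of the numerator at $\alpha=2m$ (your appeal to $\zeta(1-2m,a)=-B_{2m}(a)/(2m)$ and the symmetry $B_{2m}(1-a)=B_{2m}(a)$ is the right one) and the analyticity/uniform-convergence remarks ensuring the limit of the right-hand side equals the value of the continued series on the left --- which you correctly single out as the real obstacle, and which is unproblematic here since $2m\geq2$ places $\alpha$ in the region of absolute convergence. One mislabel to fix: the representation you start from is not the Choi--Srivastava theorem of this paper (that name refers to \eqref{choi-srivastava}, a power-series identity over Hurwitz zeta values used only for the later alternating and odd-index series); it is, as your own parenthesis concedes, a rearrangement of Hurwitz's formula \eqref{hurwitz-formula}, and it should be cited as such.
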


\begin{proof} We are not allowed to replace $\a$ with $2m$ in the first formula of \eqref{polylogarithm1}
since $\sin\frac\pi2\a=0$ and for $k=m-1$ the $\z$ function has the pole, so we encounter singularities.

We make use of the Hurwitz zeta formula \cite{apostol}, rewriting it in a more suitable form
\begin{equation}\label{hurwitz-formula}
\z(1-s,a)=\frac{2\,\G(s)}{(2\pi)^s}\sum_{n=1}^\infty\frac{1}{n^s}\cos(\tfrac{\pi s}2-2n\pi a),
\end{equation}
where Hurwitz's zeta function initially is defined for $\Re s>1$ by the series
\begin{equation}\label{hurwitz-sum}
\z(s,a)=\sum_{k=0}^\infty\frac1{(k+a)^s},\quad0<a\lesl 1,
\end{equation}
but its integral representation
\begin{equation*}
\z(s,a)=\int_0^\infty t^{s-1}\sum_{k=0}^\infty e^{-(k+a)t}\,\rmd t
=\frac1{\G(z)}\int_0^\infty\frac{t^{s-1}e^{-at}}{1-e^{-t}}\,\rmd t
\end{equation*}
enables us to extend it for all complex numbers, excluding $s=1$.

In the first derivative of the formula \eqref{hurwitz-formula} for $s=2m$ and $a=\frac x{2\pi}$,
we have
\begin{multline*}
-\z\hs\Big(1-2m,\frac x{2\pi}\Big)=\frac{2(-1)^{m}\G(2m)}{(2\pi)^{2m}}
\big(\psi(2m)-\log 2\pi\big)\sum_{n=1}^\infty\frac{\cos nx}{n^{2m}}\\
-\frac{2(-1)^{m}\G(2m)}{(2\pi)^{2m}}\sum_{n=1}^\infty\frac{\cos nx}{n^{2m}}\log n
-\frac{\pi(-1)^{m-1}\G(2m)}{(2\pi)^{2m}}\sum_{n=1}^\infty\frac{\sin nx}{n^{2m}}.
\end{multline*}
For $a=1-\frac x{2\pi}$, we have
\begin{multline*}
-\z\hs\Big(1-2m,1-\frac x{2\pi}\Big)=\frac{2(-1)^{m}\G(2m)}{(2\pi)^{2m}}
\big(\psi(2m)-\log 2\pi\big)\sum_{n=1}^\infty\frac{\cos nx}{n^{2m}}\\
-\frac{2(-1)^{m}\G(2m)}{(2\pi)^{2m}}\sum_{n=1}^\infty\frac{\cos nx}{n^{2m}}\log n
-\frac{\pi(-1)^{m}\G(2m)}{(2\pi)^{2m}}\sum_{n=1}^\infty\frac{\sin nx}{n^{2m}}.
\end{multline*}
By subtracting these equalities, we get
$$
\z\hs\Big(1-2m,1-\frac x{2\pi}\Big)-\z\hs\Big(1-2m,\frac x{2\pi}\Big)
=\frac{(-1)^{m}(2m-1)!}{(2\pi)^{2m-1}}\sum_{n=1}^\infty\frac{\sin nx}{n^{2m}}.
$$
Hence, we obtain the sum of the sine series. \end{proof}

\begin{theorem} Letting $\a\to2m-1$ in the second formula of \eqref{polylogarithm1},
one brings the cosine series in closed form
\begin{equation}\label{lim-value1-2}
\sum_{n=1}^\infty\frac{\cos nx}{n^{2m-1}}
=\frac{(-1)^{m-1}(2\pi)^{2m-2}}{(2m-2)!}\Big(\z{\hskip0.2mm}'\Big(2-2m,1-\frac x{2\pi}\Big)
+\z{\hskip0.2mm}'\Big(2-2m,\frac x{2\pi}\Big)\Big),
\end{equation}
where $0<x<2\pi$. The cosine series presents the Clausen function $\textrm{\rm Cl}_{2m-1}$ \rm{\cite{itsf-34-6}}.
\end{theorem}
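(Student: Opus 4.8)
The plan is to mirror the argument of the first theorem, with the single essential change that we differentiate the Hurwitz formula \eqref{hurwitz-formula} at the odd value $s=2m-1$ rather than at $s=2m$, and that we \emph{add} the two resulting identities instead of subtracting them. First I would record that, since $1-s=2-2m$ when $s=2m-1$, differentiating \eqref{hurwitz-formula} in $s$ and writing $G(s)=2\G(s)/(2\pi)^s$ (so that $G'(s)=G(s)(\psi(s)-\log2\pi)$) produces the master identity
\begin{multline*}
-\z\hs(2-2m,a)=G(2m-1)\Big[(\psi(2m-1)-\log2\pi)\sum_{n=1}^\infty\frac{\cos(\tfrac{\pi(2m-1)}2-2n\pi a)}{n^{2m-1}}\\
-\sum_{n=1}^\infty\frac{\log n}{n^{2m-1}}\cos(\tfrac{\pi(2m-1)}2-2n\pi a)
-\frac\pi2\sum_{n=1}^\infty\frac{\sin(\tfrac{\pi(2m-1)}2-2n\pi a)}{n^{2m-1}}\Big],
\end{multline*}
the overall sign coming from $\tfrac{d}{ds}\z(1-s,a)=-\z\hs(1-s,a)$.

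The second step is to simplify the two trigonometric arguments at the odd index. Because $\tfrac{\pi(2m-1)}2=m\pi-\tfrac\pi2$, the elementary identities give $\cos(m\pi-\tfrac\pi2-\theta)=(-1)^{m-1}\sin\theta$ and $\sin(m\pi-\tfrac\pi2-\theta)=(-1)^{m-1}\cos\theta$; this is the decisive point where the parity differs from the even case, since now the \emph{cosine} series is carried by the derivative of the cosine factor, while the \emph{sine} series (together with its $\psi$, $\log2\pi$ and $\log n$ weights) sits in the remaining terms. Setting $a=\tfrac x{2\pi}$ so that $2n\pi a=nx$, and then $a=1-\tfrac x{2\pi}$ so that $2n\pi a=2n\pi-nx$, I would invoke $2\pi$-periodicity to check that passing from the first choice to the second flips the sign of every $\sin nx$ but leaves every $\cos nx$ unchanged.

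The final step is to add the two specialised identities. The sine series, being odd under $a\mapsto1-a$, cancels together with its $\psi-\log2\pi$ and $\log n$ companions, while the cosine series, being even, doubles. What survives, after collecting $G(2m-1)=2(2m-2)!/(2\pi)^{2m-1}$ with the factor $-\tfrac\pi2$ and using $2\pi/(2\pi)^{2m-1}=(2\pi)^{-(2m-2)}$, is
$$
\z\hs\Big(2-2m,1-\frac x{2\pi}\Big)+\z\hs\Big(2-2m,\frac x{2\pi}\Big)
=\frac{(-1)^{m-1}(2m-2)!}{(2\pi)^{2m-2}}\sum_{n=1}^\infty\frac{\cos nx}{n^{2m-1}},
$$
and solving for the cosine series yields \eqref{lim-value1-2}. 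I expect the only real pitfall to be the bookkeeping of signs: one must track the $(-1)^{m-1}$ from the trigonometric reduction, the $-\tfrac\pi2$ from differentiating the cosine factor, and the leading minus sign from $\tfrac{d}{ds}\z(1-s,a)$ simultaneously, since a single misplaced sign would either swap the two $\z\hs$ terms or spoil the cancellation of the sine series.
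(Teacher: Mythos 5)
Your proof is correct and takes essentially the same route as the paper's: differentiate the Hurwitz formula \eqref{hurwitz-formula} with respect to $s$, set $s=2m-1$, specialize $a=\frac{x}{2\pi}$ and $a=1-\frac{x}{2\pi}$, and add the two identities so that the sine series (with its $\psi$, $\log 2\pi$, $\log n$ weights) cancels while the cosine series doubles. All signs and constants in your reduction, including the final factor $\frac{(-1)^{m-1}(2m-2)!}{(2\pi)^{2m-2}}$, agree with the paper's computation.
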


\begin{proof} Similarly, one cannot immediately replace $\a$ with $2m-1$, so it is necessary to perform
a different method. Taking the first derivative of the formula \eqref{hurwitz-formula} with
respect to $s$ for $a=\frac x{2\pi}$, we have
\begin{multline*}
-\z\hs\Big(1-s,\frac x{2\pi}\Big)=\frac{2\,\G(s)}{(2\pi)^s}
\big(\psi(s)-\log 2\pi\big)\sum_{n=1}^\infty\frac{\cos\big(\frac{\pi s}2-nx\big)}{n^s}\\
-\frac{2\,\G(s)}{(2\pi)^s}\sum_{n=1}^\infty\frac{\cos\big(\frac{\pi s}2-nx\big)}{n^s}\log n
-\frac{\pi\G(s)}{(2\pi)^s}\sum_{n=1}^\infty\frac{\sin\big(\frac{\pi s}2-nx\big)}{n^s}.
\end{multline*}
Setting here $s=2m-1$, we find
\begin{multline*}
-\z\hs\Big(2-2m,\frac x{2\pi}\Big)=\frac{2(-1)^{m-1}\G(2m-1)}{(2\pi)^{2m-1}}
\big(\psi(2m-1)-\log 2\pi\big)\sum_{n=1}^\infty\frac{\sin nx}{n^{2m-1}}\\
-\frac{2(-1)^{m-1}\G(2m-1)}{(2\pi)^{2m-1}}\sum_{n=1}^\infty\frac{\sin nx}{n^{2m-1}}\log n
-\frac{\pi(-1)^{m-1}\G(2m-1)}{(2\pi)^{2m-1}}\sum_{n=1}^\infty\frac{\cos nx}{n^{2m-1}}.
\end{multline*}
For $a=1-\frac x{2\pi}$, we have
\begin{multline*}
-\z\hs\Big(2-2m,1-\frac x{2\pi}\Big)=\frac{2(-1)^{m}\G(2m-1)}{(2\pi)^{2m-1}}
\big(\psi(2m-1)-\log 2\pi\big)\sum_{n=1}^\infty\frac{\sin nx}{n^{2m-1}}\\
-\frac{2(-1)^{m}\G(2m-1)}{(2\pi)^{2m-1}}\sum_{n=1}^\infty\frac{\sin nx}{n^{2m-1}}\log n
-\frac{\pi(-1)^{m-1}\G(2m-1)}{(2\pi)^{2m-1}}\sum_{n=1}^\infty\frac{\cos nx}{n^{2m-1}}.
\end{multline*}
By adding these equalities, we get
$$
\z\hs\Big(2-2m,1-\frac x{2\pi}\Big)+\z\hs\Big(2-2m,\frac x{2\pi}\Big)
=\frac{(-1)^{m-1}(2m-2)!}{(2\pi)^{2m-2}}\sum_{n=1}^\infty\frac{\cos nx}{n^{2m-1}}.
$$
Thence we obtain the sum of the cosine series. \end{proof}

For $a=1,\ b=0,\ s=-1$ and putting $f=\sin$ in \eqref{trig},
based on \eqref{polylogarithm1} and using its relation to the zeta function, $\eta(s)=(1-2^{1-s})\z(s)$,
we obtain the alternating series over the sines and cosines expressed in terms of the Dirichlet eta function
\begin{align}
\sum_{n=1}^\infty\frac{(-1)^{n-1}\sin nx}{n^\a}&=\sum_{n=1}^\infty\frac{(-1)^{n-1}\sin nx}{n^\a}
-\frac1{2^{\a-1}}\sum_{n=1}^\infty\frac{\sin2nx}{n^\a}\label{polylogarithm1-2-1}\\
&=\sum_{k=0}^\infty\frac{(-1)^k\eta(\a-2k-1)}{(2k+1)!}\,x^{2k+1},\quad-\pi<x<\pi.\notag
\end{align}

Similarly, for $a=1,\ b=0,\ s=-1$ and putting $f=\cos$ in \eqref{trig}, we find
\begin{equation}\label{polylogarithm1-1-2}
\sum_{n=1}^\infty\frac{(-1)^{n-1}\cos nx}{n^\a}
=\sum_{k=0}^\infty\frac{(-1)^k\eta(\a-2k)}{(2k)!}\,x^{2k},\quad-\pi<x<\pi.
\end{equation}

\begin{theorem} For $\a=2m$, the series \eqref{polylogarithm1-2-1} takes the closed form
\begin{equation}\label{cf-sin-al}
\begin{split}
\sum_{n=1}^\infty&\frac{(-1)^{n-1}\sin nx}{n^{2m}}
=\frac{(-1)^{m}\pi^{2m-1}}{(2m-1)!}
\bigg(2^{2m-1}\z{\hskip0.15mm}'\Big(1-2m,1-\frac x{2\pi}\Big)\\
&-2^{2m-1}\z{\hskip0.15mm}'\Big(1-2m,\frac x{2\pi}\Big)
-\z{\hskip0.15mm}'\Big(1-2m,1-\frac x\pi\Big)+\z{\hskip0.15mm}'\Big(1-2m,\frac x\pi\Big)\bigg),
\end{split}
\end{equation}
\end{theorem}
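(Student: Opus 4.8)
The plan is to avoid repeating the Hurwitz-formula manipulation of the previous theorems and instead reduce the alternating series \emph{algebraically} to the ordinary sine series already evaluated in \eqref{lim-value1}. The middle member of \eqref{polylogarithm1-2-1}, specialized to $\a=2m$, records the splitting
$$\sum_{n=1}^\infty\frac{(-1)^{n-1}\sin nx}{n^{2m}}=\sum_{n=1}^\infty\frac{\sin nx}{n^{2m}}-\frac1{2^{2m-1}}\sum_{n=1}^\infty\frac{\sin 2nx}{n^{2m}},$$
which comes from subtracting twice the even-indexed part from the full sine series and using $(2k)^{2m}=2^{2m}k^{2m}$. Once the two non-alternating sine series on the right are in closed form, only bookkeeping remains.

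First I would insert \eqref{lim-value1} verbatim into the first sum, contributing
$$\frac{(-1)^{m}(2\pi)^{2m-1}}{(2m-1)!}\Big(\z'\Big(1-2m,1-\tfrac x{2\pi}\Big)-\z'\Big(1-2m,\tfrac x{2\pi}\Big)\Big).$$
Then I would apply the same formula with $x$ replaced by $2x$ to the second sum; the decisive simplification is that its Hurwitz arguments collapse to $\tfrac{2x}{2\pi}=\tfrac x\pi$ and $1-\tfrac x\pi$, giving
$$\frac{(-1)^{m}(2\pi)^{2m-1}}{(2m-1)!}\Big(\z'\Big(1-2m,1-\tfrac x\pi\Big)-\z'\Big(1-2m,\tfrac x\pi\Big)\Big).$$
These are precisely the two $x/\pi$ terms appearing in \eqref{cf-sin-al}.

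The closing step is to extract a common prefactor. Writing $(2\pi)^{2m-1}=2^{2m-1}\pi^{2m-1}$, the factor $2^{-(2m-1)}$ standing in front of the second sum cancels the $2^{2m-1}$ hidden in $(2\pi)^{2m-1}$ and leaves a bare $\pi^{2m-1}$, whereas the first sum retains a relative factor $2^{2m-1}$. Pulling $\tfrac{(-1)^m\pi^{2m-1}}{(2m-1)!}$ out of both contributions then yields exactly the four-term bracket of \eqref{cf-sin-al}, with the $2^{2m-1}$ attached to the two $x/2\pi$ terms and the two $x/\pi$ terms entering with the opposite sign.

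The only genuine point requiring care, and the main obstacle, is the range of validity. Formula \eqref{lim-value1} holds for $0<x<2\pi$, but reused at argument $2x$ it requires $0<2x<2\pi$, that is $0<x<\pi$; intersecting with the interval $-\pi<x<\pi$ on which \eqref{polylogarithm1-2-1} was obtained confines the identity to $0<x<\pi$. This is also exactly the range in which all four Hurwitz arguments $\tfrac x{2\pi},\,1-\tfrac x{2\pi},\,\tfrac x\pi,\,1-\tfrac x\pi$ lie in the admissible interval $(0,1)$ of \eqref{hurwitz-sum}, so no analytic-continuation subtlety arises; one then extends to $-\pi<x<0$ by the oddness in $x$ of both sides.
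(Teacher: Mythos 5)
Your proof is correct, but it takes a genuinely different and much shorter route than the paper. The paper does not reuse its own closed form \eqref{lim-value1} at all: it starts from the $\eta$-power series, isolates the $k=m-1$ term $\eta(1)=\log2$ as in \eqref{log2}, rewrites the tail $k\geq m$ as a difference of two zeta series via $\eta(s)=(1-2^{1-s})\z(s)$ and \eqref{z1-2n}, and then invokes the Choi--Srivastava theorem \eqref{choi-srivastava} with $a=1$, $n=2m-1$, $t=\pm\frac x{2\pi}$ and $t=\pm\frac x\pi$, finally clearing the residual finite sums with $\z(2k-2m)=0$, \eqref{der-zeta-even} and \eqref{hurwitz-a-a+1}. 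You instead read the middle member of \eqref{polylogarithm1-2-1} as the dilation identity
$$
\sum_{n=1}^\infty\frac{(-1)^{n-1}\sin nx}{n^{2m}}
=\sum_{n=1}^\infty\frac{\sin nx}{n^{2m}}-\frac1{2^{2m-1}}\sum_{n=1}^\infty\frac{\sin 2nx}{n^{2m}}
$$
(this is the correct reading; the paper's first right-hand sum there is a typo, it must be non-alternating, exactly as you state), and then substitute the already proved formula \eqref{lim-value1} at $x$ and at $2x$. Your factor bookkeeping $(2\pi)^{2m-1}=2^{2m-1}\pi^{2m-1}$ is exactly right and reproduces \eqref{cf-sin-al}, so given the first theorem of Section \ref{s2} the result is a two-line corollary. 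What your route buys is brevity and an honest validity range, $0<x<\pi$, which the theorem itself leaves unstated but which is also all that the paper's own argument delivers (all four Hurwitz arguments must lie in $(0,1)$, and the paper needs $|t|<1$ plus $\log\frac x{2\pi}$ in its use of \eqref{hurwitz-a-a+1}). What the paper's route buys is uniformity: essentially the same Choi--Srivastava computation recurs in the later $\la$- and $\b$-theorems, including the $\b$-series over odd integers, where a reduction to previously summed series is less immediate (a phase shift rather than a dilation). One small correction to your last sentence: the extension to $-\pi<x<0$ cannot be phrased as ``oddness of both sides,'' since for negative $x$ the arguments $\frac x{2\pi}$ and $\frac x\pi$ leave the domain on which $\z(s,a)$, and hence the right-hand side of \eqref{cf-sin-al}, is defined; the correct statement is that for such $x$ one applies the proved identity at $-x\in(0,\pi)$ and negates the result.
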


\begin{proof} These replacements are legitimate because we do not encounter singularities
because of \eqref{eta-ext} the function $\eta$ being analytic in the whole complex plane.
So we easily calculate
$$
\eta(1)=\lim_{s\to1}\eta(s)=\lim_{s\to1}(1-2^{1-s})\z(s)=\lim_{s\to1}\frac{1-2^{1-s}}{s-1}(s-1)\z(s)=\log2.
$$
Further, from the first formula of \eqref{polylogarithm1-1-2}, we have
\begin{equation*}
\sum_{n=1}^\infty\frac{(-1)^{n-1}\sin nx}{n^{2m}}
=\sum_{k=0}^\infty\frac{(-1)^k\eta(2m-2k-1)}{(2k+1)!}x^{2k+1},
\end{equation*}
and split the right-hand side series as follows
\begin{multline}\label{log2}
\sum_{k=0}^{m-2}\frac{(-1)^k\eta(2m-2k-1)}{(2k+1)!}x^{2k+1}+\frac{(-1)^{m-1}x^{2m-1}}{(2m-1)!}\log2\\
+\sum_{k=m}^\infty\frac{(-1)^k\eta(2m-2k-1)}{(2k+1)!}x^{2k+1}.
\end{multline}
Taking account of $\eta(2m-2k-1)=(1-2^{1-(2m-2k-1)})\z(2m-2k-1)$, we can express the last right-hand side
series in terms of the zeta function as a difference of two series
\begin{equation}\label{k_m}
\sum_{k=m}^\infty\frac{(-1)^k\z(2m-2k-1)}{(2k+1)!}x^{2k+1}
-\sum_{k=m}^\infty\frac{(-1)^k\z(2m-2k-1)}{2^{2m-2k-2}(2k+1)!}x^{2k+1}.
\end{equation}

If we set $z=2n$ in \eqref{z-fun-eq}, we can determine values of Riemann's zeta
function at negative odd integers
\begin{equation}\label{z1-2n}
\z(1-2n)=(-1)^n\frac{2(2n-1)!\z(2n)}{(2\pi)^{2n}}.
\end{equation}
On the other hand, integration by parts of the integral \eqref{gamma-f} gives rise to the basic relation
\begin{equation}\label{gamma-der}
\G(s+1)=s\,\G(s),
\end{equation}
which can extend for arbitrary $n\in\mathbb N$
\begin{equation*}
\G(s+n)=(s+n-1)(s+n-2)\cdots s\,\G(s).
\end{equation*}
The last formula one uses to express the Pochhammer symbol
\begin{equation}\label{pochhammer}
(s)_n=s(s+1)(s+2)\cdots(s+n-1)=\frac{\G(s+n)}{\G(s)}.
\end{equation}

By shifting indices in both series of \eqref{k_m} and applying \eqref{z1-2n} and \eqref{pochhammer},
the difference \eqref{k_m} becomes
\begin{equation}\label{diff-zeta-2k}
2(-1)^{m-1}x^{2m-1}\bigg(\sum_{k=1}^\infty\frac{\z(2k)}{(2k)_{2m}}\Big(\frac x{2\pi}\Big)^{2k}
-\sum_{k=1}^\infty\frac{\z(2k)}{(2k)_{2m}}\Big(\frac x\pi\Big)^{2k}\bigg).
\end{equation}
where $(2k)_{2m}$ denotes Pochhammer's symbol given by \eqref{pochhammer}. We refer now to
Theorem in \cite[p.~419]{choi}, which we state here in a slightly modified form.
\begin{theorem}\label{th-choi-srivastava} For every non-negative integer $n$ there holds
\begin{align}
&\hskip-0.2cm\sum_{k=2}^\infty\frac{\z(k,a)}{(k)_{n+1}}t^{n+k}=\frac{(-1)^n}{n!}
\Big(\z{\hskip0.15mm}'(-n,a-t)-\z{\hskip0.15mm}'(-n,a)
+\sum_{k=1}^n(-t)^k\binom{n}{k}\times\notag\\
&\big(\z(k-n,a)(H_n-H_{n-k})-\z{\hskip0.15mm}'(k-n,a)\big)\Big)
+\big(H_n+\psi(a)\big)\frac{t^{n+1}}{(n+1)!},\label{choi-srivastava}
\end{align}
with $|t|<|a|,\,n\in\mathbb N_0$.
\end{theorem}
In the formula \eqref{choi-srivastava}, $H_n$ is the $n$th harmonic number given as
the sum of reciprocal values of the first $n$ positive integers,
and the function $\psi$ (also known as the digamma)
is the first logarithmic derivative of the gamma function \cite{bateman}.
They are connected by
\begin{equation}\label{psi-hn}
H_{n-1}=\psi(n)+\g,\quad n\in\mathbb N,\quad H_0=0.
\end{equation}

For the first series of \eqref{diff-zeta-2k}, we set in \eqref{choi-srivastava}
$a=1$, $n=2m-1$, $t=\frac x{2\pi}$, then again $a=1$, $n=2m-1$, $t=-\frac x{2\pi}$, and subtract
the second equality from the first. This way, we obtain
\begin{align*}
&\hskip-0.3cm2(-1)^{m-1}x^{2m-1}\sum_{k=1}^\infty\frac{\z(2k)}{(2k)_{2m}}\Big(\frac x{2\pi}\Big)^{2k}
=\frac{(-1)^m(2\pi)^{2m-1}}{(2m-1)!}\times\\
&\bigg(\z\hs\Big(-2m+1,1-\frac x{2\pi}\Big)-\z\hs\Big(-2m+1,1+\frac x{2\pi}\Big)
-2\sum_{k=1}^{m}\binom{2m-1}{2k-1}\times\notag\\
&\Big(\frac x{2\pi}\Big)^{2k-1}\big((H_{2m-1}-H_{2m-2k})\z(2k-2m)-\z\hs(2k-2m)\big)\bigg).\notag
\end{align*}
Knowing that $\z(2k-2m)=0$ for $k=1,\dots,m-1$ and $\z(0)=-1/2$, $H_0=0$, the right-hand side sum becomes
$$
\Big(\frac x{2\pi}\Big)^{2m-1}H_{2m-1}+2\sum_{k=1}^{m}\binom{2m-1}{2k-1}\Big(\frac x{2\pi}\Big)^{2k-1}\z\hs(2k-2m).
$$

From \eqref{hurwitz-sum} we immediately derive its basic property
\begin{equation}\label{hurwitz-a-a+1}
\z(s,a)=\frac1{a^s}+\frac1{(1+a)^s}+\frac1{(2+a)^s}+\cdots=a^{-s}+\z(s,a+1).
\end{equation}
Taking the derivative with respect to $s$ on both sides of \eqref{hurwitz-a-a+1}, putting afterwards
there $a=\frac x{2\pi}$ and $s=1-2m$, we find
$$
-\z\hs\Big(1-2m,1+\frac x{2\pi}\Big)=-\Big(\frac x{2\pi}\Big)^{2m-1}\log\frac x{2\pi}-\z\hs\Big(1-2m,\frac x{2\pi}\Big).
$$

We obtain the same structure for the second series, with the sole difference that instead of $\frac x{2\pi}$,
there appears $\frac x\pi$. The subtraction yields the sum of \eqref{diff-zeta-2k}, which is the sum of the right-hand
side series in \eqref{log2}. So for \eqref{diff-zeta-2k} we find
\begin{equation}\label{subtr}
\begin{split}
&\frac{(-1)^m\pi^{2m-1}}{(2m-1)!}\bigg(2^{2m-1}\z{\hskip0.15mm}'\Big(1-2m,1-\frac x{2\pi}\Big)
-2^{2m-1}\z{\hskip0.15mm}'\Big(1-2m,\frac x{2\pi}\Big)\\
&-\z{\hskip0.15mm}'\Big(1-2m,1-\frac x\pi\Big)+\z{\hskip0.15mm}'\Big(1-2m,\frac x\pi\Big)
+\Big(\frac x\pi\Big)^{2m-1}\log2\\
&+2^{2m}\sum_{k=1}^{m}\binom{2m-1}{2k-1}\Big(\frac x{2\pi}\Big)^{2k-1}(1-2^{1-(2m-2k+1)})\z{\hskip0.15mm}'(2k-2m)\bigg).
\end{split}
\end{equation}

By differentiating \eqref{z-fun-eq} we can evaluate $\z\hs(-2n)$ for positive integers $n$.
So, the left-hand side is $-\z\hs(1-s)$, but on the right-hand side, we obtain a sum of four
terms and notice that in three of them, there appears $\cos\frac{\pi s}2$ whilst only one
contains $\sin\frac{\pi s}2$, i.e.
\begin{equation*}
-\frac{\pi\z(s)}{(2\pi)^{s}}\G(s)\sin\frac{\pi s}2.
\end{equation*}
With $s=2n+1$, all the terms except for the latter become zero, and we find
\begin{equation}\label{der-zeta-even}
\z\hs(-2n)=\frac{(-1)^n(2n)!\z(2n+1)}{2(2\pi)^{2n}},\quad n\in\mathbb N.
\end{equation}
Because of \eqref{der-zeta-even} and $\eta(s)=(1-2^{1-s})\z(s)$, for the last row of \eqref{subtr}, we have
$$
-(-1)^{m}\frac{(2m-1)!}{\pi^{2m-1}}\sum_{k=0}^{m-2}\frac{(-1)^{k}\eta(2m-2k-1)}{(2k+1)!}x^{2k+1}.
$$
Adding this way modified formula \eqref{subtr} to \eqref{log2}, after a rearrangement,
we arrive at \eqref{cf-sin-al}. \end{proof}

\begin{theorem} For $\a=2m-1$, the series \eqref{polylogarithm1-1-2} takes the closed form
\begin{equation}\label{cf-cos-al}
\begin{split}
\sum_{n=1}^\infty&\frac{(-1)^{n-1}\cos nx}{n^{2m-1}}
=\frac{(-1)^{m-1}\pi^{2m-2}}{(2m-2)!}\bigg(
2^{2m-2}\z{\hskip0.15mm}'\Big(2-2m,1-\frac x{2\pi}\Big)\\
&+2^{2m-2}\z{\hskip0.15mm}'\Big(-2m+2,\frac x{2\pi}\Big)
-\z{\hskip0.15mm}'\Big(2-2m,1-\frac x\pi\Big)
-\z{\hskip0.15mm}'\Big(2-2m,\frac x\pi\Big)\bigg).
\end{split}
\end{equation}
\end{theorem}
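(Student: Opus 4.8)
The plan is to run the argument of the preceding theorem (the alternating sine series) almost verbatim, the two structural changes being that the odd index $2m-1$ is everywhere replaced by the even index $2m-2$, and that an \emph{addition} of two Choi--Srivastava specializations replaces the subtraction used there. First I would set $\a=2m-1$ in \eqref{polylogarithm1-1-2}, which is legitimate because $\eta$ is entire, and split the power series $\sum_{k=0}^\infty\frac{(-1)^k\eta(2m-1-2k)}{(2k)!}x^{2k}$ into a finite head $\sum_{k=0}^{m-2}$, the isolated term $k=m-1$ contributing $\frac{(-1)^{m-1}}{(2m-2)!}x^{2m-2}\log2$ through $\eta(1)=\log2$, and a tail $\sum_{k=m}^\infty$ in which $\eta$ is sampled at the negative odd integers.

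On the tail I would substitute $\eta(s)=(1-2^{1-s})\z(s)$, split it into two $\z$-series, and apply the functional-equation value \eqref{z1-2n} together with the Pochhammer identity \eqref{pochhammer}; after a shift of the summation index this recasts the tail, in exact analogy with \eqref{diff-zeta-2k}, as $2(-1)^{m-1}x^{2m-2}$ times the difference of $\sum_{k=1}^\infty\z(2k)/(2k)_{2m-1}\,(\frac x{2\pi})^{2k}$ and the same series with $\frac x\pi$ in place of $\frac x{2\pi}$, now carrying Pochhammer subscript $2m-1$. To each series I would invoke Theorem~\ref{th-choi-srivastava} with $a=1$ and $n=2m-2$, once with $t=\frac x{2\pi}$ and once with $t=-\frac x{2\pi}$, and \emph{add} the two specializations: since $n=2m-2$ is even, the addition selects precisely the even powers $t^{n+2k}$ that the $\z(2k)$-series requires. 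This produces the combination $\z\hs(2-2m,1-\frac x{2\pi})+\z\hs(2-2m,1+\frac x{2\pi})$, which \eqref{hurwitz-a-a+1} converts into the first two Hurwitz-derivative terms of \eqref{cf-cos-al} up to a logarithmic remainder $(\frac x{2\pi})^{2m-2}\log\frac x{2\pi}$, together with a finite block of correction terms assembled from $\binom{2m-2}{k}$, the values $\z(k-2m+2)$ and $\z\hs(k-2m+2)$, and the harmonic differences $H_{2m-2}-H_{2m-2-k}$. The second series yields the identical structure with $\frac x\pi$ for $\frac x{2\pi}$, and the subtraction of the two turns the bare logarithms into the $\log2$ contributions.

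The decisive step, which I expect to be the main obstacle, is the reduction of the correction block. Here the even parity of $n=2m-2$ is what matters: as $k-2m+2$ ranges over $3-2m,\dots,0$, the vanishing $\z(-2j)=0$ annihilates the terms of one parity, $\z(0)=-\tfrac12$ fixes the endpoint, and \eqref{der-zeta-even} evaluates the surviving $\z\hs(-2j)$ through $\z(2j+1)$. Rewriting these back with $\eta(s)=(1-2^{1-s})\z(s)$, I expect the correction block together with the $\log2$ term to reproduce exactly $-(-1)^{m-1}\frac{(2m-2)!}{\pi^{2m-2}}\sum_{k=0}^{m-2}\frac{(-1)^k\eta(2m-1-2k)}{(2k)!}x^{2k}$, the negative of the finite head; feeding this back cancels the head and leaves only the four Hurwitz-zeta derivatives of \eqref{cf-cos-al}. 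Checking that the harmonic-number-weighted terms and the $\z\hs$-terms recombine with the correct signs after the parity shift from $2m-1$ to $2m-2$ is the delicate point, while everything else is the routine bookkeeping already carried out in the sine case.
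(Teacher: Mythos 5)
Your proposal follows essentially the same route as the paper's proof: split off the finite head and the $\eta(1)=\log2$ term, reduce the tail to a difference of two $\z(2k)$-series via $\eta(s)=(1-2^{1-s})\z(s)$, \eqref{z1-2n} and \eqref{pochhammer}, apply Theorem \ref{th-choi-srivastava} with $a=1$, $n=2m-2$, convert the arguments $1+t$ by \eqref{hurwitz-a-a+1}, and cancel the correction block against the head using the trivial zeros, $\z(0)=-\tfrac12$ and \eqref{der-zeta-even}. You even make explicit a detail the paper leaves implicit behind the phrase ``similar procedure'' — that for even $n=2m-2$ the two specializations $t=\pm\frac x{2\pi}$ must be \emph{added} rather than subtracted to select the even powers — so the plan is correct and matches the paper's argument.
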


\begin{proof} Following a similar procedure as in the proof of the preceding theorem, we replace
$\a$ with $2m-1$ in \eqref{polylogarithm1-1-2}, and have
$$
\sum_{n=1}^\infty\frac{(-1)^{n-1}\cos nx}{n^{2m-1}}
=\sum_{k=0}^\infty\frac{(-1)^k\eta(2m-2k-1)}{(2k)!}\,x^{2k}.
$$
The function $\eta(s)$ is analytic in the whole complex plane, so to bring the latter series in closed form,
we express it first as a sum comprising the value of $\eta(1)$, obtained for $k=m-1$, and its remainder, i.e.
\begin{multline}\label{log-2}
\sum_{k=0}^{m-2}\frac{(-1)^k\eta(2m-2k-1)}{(2k)!}x^{2k}+\frac{(-1)^{m-1}x^{2m-2}}{(2m-2)!}\log2\\
+\sum_{k=m}^\infty\frac{(-1)^k\eta(2m-2k-1)}{(2k)!}x^{2k}.
\end{multline}
Further, we act in the same manner as in the previous proof. Relying on the relation
$\eta(s)=(1-2^{1-s})\z(s)$ and \eqref{z1-2n}, we determine the remainder in \eqref{log-2}
as a difference of the two series
\begin{multline*}
\sum_{k=m}^\infty\frac{(-1)^k\z(2m-2k-1)}{(2k)!}x^{2k}
-\sum_{k=m}^\infty\frac{(-1)^k\z(2m-2k-1)}{2^{2m-2k-2}(2k)!}x^{2k}\\
=2(-1)^{m-1}x^{2m-2}\bigg(\sum_{k=1}^\infty\frac{\z(2k)}{(2k)_{2m-1}}\Big(\frac x{2\pi}\Big)^{2k}
-\sum_{k=1}^\infty\frac{\z(2k)}{(2k)_{2m-1}}\Big(\frac x\pi\Big)^{2k}\bigg).
\end{multline*}

Applying again Theorem \ref{th-choi-srivastava}, where we take $n=2m-2$ and $a=1$, then in succession
set $t=\frac x{2\pi}$ and $t=\frac x{\pi}$, we obtain two formulas of the same structure. Subtracting
them gives rise to the sum of the right-hand side series in \eqref{log-2}, which is
\begin{equation}\label{subtr-2}
\begin{split}
&\frac{(-1)^{m-1}\pi^{2m-2}}{(2m-2)!}\bigg(2^{2m-2}\z{\hskip0.15mm}'\Big(2-2m,1-\frac x{2\pi}\Big)
+2^{2m-2}\z{\hskip0.15mm}'\Big(2-2m,\frac x{2\pi}\Big)\\
&-\z{\hskip0.15mm}'\Big(2-2m,1-\frac x\pi\Big)-\z{\hskip0.15mm}'\Big(2-2m,\frac x\pi\Big)
-\Big(\frac x\pi\Big)^{2m-2}\log2\\
&-(-1)^{m-1}\frac{(2m-2)!}{\pi^{2m-2}}\sum_{k=0}^{m-2}\frac{(-1)^{k}\eta(2m-2k-1)}{(2k)!}x^{2k}\bigg).
\end{split}
\end{equation}
Adding \eqref{subtr-2} to \eqref{log-2}, we obtain \eqref{cf-sin-al}. \end{proof}

By using 
the identity $\z(s)+\eta(s)=2\la(s)$, and summing up the first and second equation of \eqref{polylogarithm1}
with \eqref{polylogarithm1-2-1} and \eqref{polylogarithm1-1-2} respectively, we obtain
\begin{align}
\sum_{n=1}^\infty\frac{\sin(2n-1)x}{(2n-1)^\a}
&=\frac{\pi x^{\a-1}}{4\G(\a)\sin\frac\pi2\a}
+\sum_{k=0}^\infty\frac{(-1)^k\la(\a-2k-1)}{(2k+1)!}\,x^{2k+1},\label{polylogarithm3-1-2}\\
\sum_{n=1}^\infty\frac{\cos(2n-1)x}{(2n-1)^\a}
&=\frac{\pi x^{\a-1}}{4\G(\a)\cos\frac\pi2\a}
+\sum_{k=0}^\infty\frac{(-1)^k\la(\a-2k)}{(2k)!}\,x^{2k},\quad0<x<\pi,\notag
\end{align}

\begin{theorem} If $\a\to2m$ in the first formula of \eqref{polylogarithm3-1-2}, there holds
\begin{equation}\label{cf-sin-cos-odd-1}
\begin{split}
\sum_{n=1}^\infty&\frac{\sin(2n-1)x}{(2n-1)^{2m}}
=\frac{(-1)^m\pi^{2m-1}}{2(2m-1)!}\bigg(2^{2m}\z\hs\Big(1-2m,1-\frac x{2\pi}\Big)\\
&-2^{2m}\z\hs\Big(1-2m,\frac x{2\pi}\Big)-\z\hs\Big(1-2m,1-\frac x{\pi}\Big)+\z\hs\Big(1-2m,\frac x{\pi}\Big)\bigg).
\end{split}
\end{equation}
\end{theorem}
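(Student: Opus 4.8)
The plan is to sidestep the direct limit $\a\to2m$ in \eqref{polylogarithm3-1-2}, where the pole of $\la$ at $s=1$ (inherited from $\z$, since $\eta$ is regular there) meets the vanishing factor $\sin\frac\pi2\a$ in the prefactor, and instead to exploit the very identity $\z(s)+\eta(s)=2\la(s)$ that produced \eqref{polylogarithm3-1-2}. Splitting a series according to the parity of $n$ — the even-indexed terms cancel and the odd-indexed ones double — gives
\begin{equation*}
\sum_{n=1}^\infty\frac{\sin(2n-1)x}{(2n-1)^{2m}}
=\frac12\bigg(\sum_{n=1}^\infty\frac{\sin nx}{n^{2m}}
+\sum_{n=1}^\infty\frac{(-1)^{n-1}\sin nx}{n^{2m}}\bigg),
\end{equation*}
so that the odd sine series is precisely the half-sum of the ordinary and the alternating sine series.

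First I would observe that both right-hand series have already been brought into closed form: the ordinary one by \eqref{lim-value1} and the alternating one by \eqref{cf-sin-al}. Since each of those limits $\a\to2m$ was shown to exist, the limit of the $\la$-series equals the average of the two limits, and the singular contributions of the prefactor and of the $k=m-1$ term in \eqref{polylogarithm3-1-2} cancel automatically without separate analysis. It then remains only to add \eqref{lim-value1} and \eqref{cf-sin-al} and halve the result.

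The rest is bookkeeping. I would factor out the common constant $\frac{(-1)^m\pi^{2m-1}}{2(2m-1)!}$, rewriting the contribution of \eqref{lim-value1} through $(2\pi)^{2m-1}=2^{2m-1}\pi^{2m-1}$ so that its two $\z\hs$-terms carry the coefficient $2^{2m-1}$. Combining these with the matching $2^{2m-1}\z\hs\big(1-2m,1-\frac{x}{2\pi}\big)$ and $-2^{2m-1}\z\hs\big(1-2m,\frac{x}{2\pi}\big)$ from \eqref{cf-sin-al} merges the coefficients via $2^{2m-1}+2^{2m-1}=2^{2m}$, while the two arguments $1-\frac x\pi$ and $\frac x\pi$ pass through unchanged with coefficient one from \eqref{cf-sin-al}. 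Collecting terms reproduces \eqref{cf-sin-cos-odd-1}. I do not anticipate a genuine obstacle here; the only delicate point is the interchange of the limit with the half-sum, which is legitimate exactly because the two constituent limits were independently established in the earlier theorems.
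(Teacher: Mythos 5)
Your proof is correct, and it takes a genuinely different route from the paper's. The paper proves this theorem the hard way: it isolates the two singular contributions in \eqref{polylogarithm3-1-2} (the prefactor with $\sin\frac{\pi\a}2$ in the denominator and the $k=m-1$ term containing $\la(\a-2m+1)$), computes their joint limit $L_{2m}=\frac{(-1)^m x^{2m-1}}{2(2m-1)!}\big(\log\frac x2-H_{2m-1}\big)$ via L'Hospital's rule together with the expansion of $\la\hs(s)/\la(s)$ near $s=1$, and then sums the tail $\sum_{k\gesl m}$ by a fresh application of the Choi--Srivastava theorem \eqref{choi-srivastava}. You instead observe that, by absolute convergence (valid since $2m\gesl 2$), the odd-indexed sine series is the half-sum of the ordinary and alternating sine series, whose closed forms \eqref{lim-value1} and \eqref{cf-sin-al} are already established earlier in the paper; adding them and halving, with $(2\pi)^{2m-1}=2^{2m-1}\pi^{2m-1}$ so that the coefficients merge as $2^{2m-1}+2^{2m-1}=2^{2m}$, reproduces \eqref{cf-sin-cos-odd-1} exactly, on the common region $0<x<\pi$, and there is no circularity since neither of the two cited theorems depends on this one. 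Your derivation is shorter and avoids the delicate limit analysis entirely; moreover, the ``delicate point'' you flag at the end is not actually an issue: \eqref{lim-value1} and \eqref{cf-sin-al} are stated as identities for the sums of (absolutely convergent) series at the integer value itself, so your argument is purely algebraic, and the limit formulation of the present theorem is reconciled with it by continuity of the left-hand side of \eqref{polylogarithm3-1-2} in $\a$. What the paper's longer route buys is the intermediate representation \eqref{lim-value2}, with its explicit $\log\frac x2-H_{2m-1}$ term, and a self-contained template for handling the $0\cdot\infty$ cancellation, which the paper reuses for the companion cosine case \eqref{cf-sin-cos-odd-2}; but your shortcut applies there as well, by averaging \eqref{lim-value1-2} and \eqref{cf-cos-al}.
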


\begin{proof} Since $\sin\frac{2m\pi}2=0$, in the first formula of \eqref{polylogarithm3-1-2} for $\a=2m$ singularity is encountered, and for $k=m-1$ there is singularity of the lambda function at $1$, so we are not permitted to immediately replace $\a$ with $2m$ but have to take limit
\begin{equation}\label{sin2n-1-2m}
L_{2m}=\lim_{\a\to2m}\Big(\frac{\pi x^{\a-1}}{4\G(\a)\sin\frac\pi2\a}
+\frac{(-1)^{m-1}\la(\a-2m+1)}{(2m-1)!}\,x^{2m-1}\Big).
\end{equation}
Further, by bringing the fractions to the same denominator, we have
$$
\lim_{\a\to2m}\frac{\pi x^{\a-1}(2m-1)!+4(-1)^{m-1}x^{2m-1}\la(\a-2m+1)\G(\a)\sin\frac\pi2\a}{4(2m-1)!\G(\a)\sin\frac\pi2\a}.
$$
The denominator tends to zero as $\a$ tends to $2m$.

As for the numerator, knowing that $\la(s)=(1-2^{-s})\z(s)$, we have
\begin{align*}
&\lim_{\a\to2m}\Big(\pi x^{\a-1}(2m-1)!+4x^{2m-1}\G(\a)\la(\a-2m+1)(-1)^{m-1}\sin\frac\pi2\a\Big)\\
&=\pi x^{2m-1}(2m-1)!\Big(1-2\lim_{\a\to2m}(\a-2m)\la(\a-2m+1)\frac{\sin\frac\pi2(\a-2m)}{\frac\pi2(\a-2m)}\Big)=0,
\end{align*}
where, relying on the functional equation for the Riemann zeta function \eqref{z-fun-eq}, we made use of the limiting value
\begin{equation*}
\begin{split}
\lim_{s\to0}s\,\la(s+1)&=\lim_{s\to1}(s-1)\,\la(s)
=\lim_{s\to1}\frac{(2\pi)^s(s-1)(1-2^{-s})\z(1-s)}{2\hskip0.15mm\G(s)\cos\frac{s\pi}2}\\
&=-\lim_{s\to1}\frac{\pi^{s-1}\frac\pi2(1-s)\z(1-s)}{\G(s)\sin\frac\pi2(1-s)}
=-\z(0)=\tfrac12.
\end{split}
\end{equation*}

So, we can apply here L'Hospital's rule. For the limiting value of the first derivative of the denominator, we find
\begin{equation*}
4(2m-1)!\lim_{\a\to2m}\Big(\frac\pi2\cos\frac{\pi\a}{2}\G(\a)
+\sin\frac{\pi\a}{2}\G(\a)\psi(\a)\Big)=(-1)^m2\pi(2m-1)!^2,
\end{equation*}

Now, we take the limit of the first derivative of the numerator
\begin{align}
&\pi(2m-1)!x^{2m-1}\log x+(-1)^{m-1}x^{2m-1}\lim_{\a\to2m}\Big(4\la{\hskip0.3mm}'(\a-2m+1)\G(\a)\sin\frac{\pi\a}{2}\notag\\
&+4\la(\a-2m+1)\G(\a)\psi(\a)\sin\frac{\pi\a}2+2\pi\la(\a-2m+1)\G(\a)\cos\frac{\pi\a}2\Big).\label{expr}
\end{align}
First of all, using the relation $\la(s)=(1-2^{-s})\z(s)$, we find
$$
\frac{\la{\hskip0.3mm}'(s)}{\la(s)}=\frac{\z{\hskip0.3mm}'(s)}{\z(s)}+\frac{\log2}{2^s-1}.
$$
It is easy to see that in the neighbourhood of $s=1$, there holds
$$
\frac{\log2}{2^s-1}=\log2+O(s-1),
$$
and looking up in \cite[p.~23]{titchmarsh}, we read
$$
\frac{\z{\hskip0.3mm}'(s)}{\z(s)}=-\frac1{s-1}+\g+O(s-1).
$$
Thus there follows
$$
\la{\hskip0.3mm}'(\a-2m+1)=-\frac{\la(\a-2m+1)}{\a-2m}+(\g+\log2)\la(\a-2m+1)+O(\a-2m).
$$
By replacing this in \eqref{expr} omitting $O(\a-2m)$, there remains to calculate
\begin{multline*}
\lim_{\a\to2m}\G(\a)\la(\a-2m+1)\Big(-2\pi\frac{(-1)^m\sin\frac\pi2(\a-2m)}{\frac\pi2(\a-2m)}\\
+4\sin\frac{\pi\a}{2}(\psi(\a)+\g+\log2)+2\pi\cos\frac{\pi\a}2\Big).
\end{multline*}
Because of $\lim\limits_{\a\to2m}\cos\frac{\pi\a}2=\cos\pi m=(-1)^m$, we conclude that the limiting value of the sum of only the first and third terms is zero, and finding the limit reduces to
\begin{multline*}
\hskip-0.2cm\lim_{\a\to2m}\frac{2(-1)^m\pi\G(\a)\sin\frac\pi2(\a-2m)}{\frac\pi2(\a-2m)}(\a-2m)\la(\a-2m+1)(\psi(\a)+\g+\log2)\\
\hskip0.5cm=(-1)^m\pi(2m-1)!(\psi(2m)+\g+\log2)=(-1)^m\pi(2m-1)!(H_{2m-1}+\log2).
\end{multline*}
We have applied here the relation \eqref{psi-hn}.

Thus, taking account of all of this, the value of \eqref{sin2n-1-2m} is
$$
L_{2m}=\frac{(-1)^mx^{2m-1}(\log x-H_{2m-1}-\log2)}{2(2m-1)!},
$$
and the trigonometric series \eqref{cf-sin-cos-odd-1} can now be expressed as follows
\begin{align}
\sum_{n=1}^\infty&\frac{\sin(2n-1)x}{(2n-1)^{2m}}
=\frac{(-1)^{m}x^{2m-1}}{2(2m-1)!}\Big(\log\frac x2-H_{2m-1}\Big)\label{lim-value2}\\
&+\sum_{k=0}^{m-2}\frac{(-1)^k\la(2m-2k-1)}{(2k+1)!}\,x^{2k+1}
+\sum_{k=m}^\infty\frac{(-1)^k\la(2m-2k-1)}{(2k+1)!}\,x^{2k+1},\notag
\end{align}
and after shifting the summation index in the last series, making use of
the relation $\la(2m-2k-1)=(1-2^{-(2m-2k-1)})\z(2m-2k-1)$  and applying \eqref{z1-2n},
one can represent it as a difference of two series in terms of the zeta function
$$
(-1)^{m-1}x^{2m-1}\bigg(2\sum_{k=1}^\infty\frac{\z(2k)}{(2k)_{2m}}\,\Big(\frac x{2\pi}\Big)^{2k}
-\sum_{k=1}^\infty\frac{\z(2k)}{(2k)_{2m}}\,\Big(\frac x\pi\Big)^{2k}\bigg).
$$
We have already dealt with these series, the formula \eqref{diff-zeta-2k}, but here we express
this difference, as follows
\begin{align*}
&\hskip-0.3cm2(-1)^{m-1}x^{2m-1}\sum_{k=1}^\infty\frac{\z(2k)}{(2k)_{2m}}\Big(\frac x{2\pi}\Big)^{2k}
-(-1)^{m-1}x^{2m-1}\sum_{k=1}^\infty\frac{\z(2k)}{(2k)_{2m}}\Big(\frac x{\pi}\Big)^{2k}\\
&=\frac{(-1)^m\pi^{2m-1}}{2(2m-1)!}\bigg(2^{2m}\z\hs\Big(1-2m,1-\frac x{2\pi}\Big)-2^{2m}\z\hs\Big(-2m+1,1+\frac x{2\pi}\Big)\\
&+\Big(\frac x{\pi}\Big)^{2m-1}(H_{2m-1}-\log2\pi)-\z\hs\Big(1-2m,1-\frac x{\pi}\Big)+\z\hs\Big(1-2m,1+\frac x{\pi}\Big)\\
&-\frac{2(-1)^m(2m-1)!}{\pi^{2m-1}}\sum_{k=0}^{m-2}\frac{x^{2k+1}}{(2k+1)!}(-1)^{k}\la(2m-2k-1)\bigg).\notag
\end{align*}
Adding the right-hand side to \eqref{lim-value2}, after a rearrangement and using \eqref{hurwitz-a-a+1},
we obtain \eqref{cf-sin-cos-odd-1}. \end{proof}

\begin{theorem} If  $\a\to2m-1$ in the second formula of \eqref{polylogarithm3-1-2}, there holds
\begin{align}
\sum_{n=1}^\infty&\frac{\cos (2n-1)x}{(2n-1)^{2m-1}}
=\frac{(-1)^{m-1}\pi^{2m-2}}{2(2m-2)!}\bigg(2^{2m-1}\z{\hskip0.15mm}'\Big(2-2m,1-\frac x{2\pi}\Big)\label{cf-sin-cos-odd-2}\\
&+2^{2m-1}\z{\hskip0.15mm}'\Big(2-2m,1+\frac x{2\pi}\Big)-\z{\hskip0.15mm}'\Big(2-2m,1-\frac x\pi\Big)
-\z{\hskip0.15mm}'\Big(2-2m,1+\frac x\pi\Big)\bigg).\notag
\end{align}
\end{theorem}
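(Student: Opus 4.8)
The cleanest route I see rests on the decomposition exploited throughout this section: since the terms with even $n$ cancel, $\sum_{n\ge1}\frac{\cos(2n-1)x}{(2n-1)^\a}=\frac12\big(\sum_{n\ge1}\frac{\cos nx}{n^\a}+\sum_{n\ge1}\frac{(-1)^{n-1}\cos nx}{n^\a}\big)$, which is nothing but the identity $2\la=\z+\eta$ read off term by term. Both summands are already in closed form at $\a=2m-1$, namely \eqref{lim-value1-2} and \eqref{cf-cos-al}, so I would simply halve their sum. Writing $(2\pi)^{2m-2}=2^{2m-2}\pi^{2m-2}$, the two $\z\hs(2-2m,1-\frac x{2\pi})$ and the two $\z\hs(2-2m,\frac x{2\pi})$ contributions coalesce with coefficient $2^{2m-1}$, while the $\frac x\pi$-pieces of \eqref{cf-cos-al} persist with coefficient $-1$. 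This reproduces the four-term shape of \eqref{cf-sin-cos-odd-2}, up to the Hurwitz-argument shift $\frac x{2\pi}\leftrightarrow1+\frac x{2\pi}$ governed by \eqref{hurwitz-a-a+1}.

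If instead one insists on the direct limit suggested by the statement, I would mirror the proof of \eqref{cf-sin-cos-odd-1}. At $\a=2m-1$ the prefactor blows up because $\cos\frac\pi2(2m-1)=0$, while the $k=m-1$ term of the series carries $\la(1)$, the pole of $\la$; I would merge the two into the single limit
\[
L_{2m-1}=\lim_{\a\to2m-1}\Big(\frac{\pi x^{\a-1}}{4\G(\a)\cos\frac\pi2\a}+\frac{(-1)^{m-1}\la(\a-2m+2)}{(2m-2)!}\,x^{2m-2}\Big).
\]
After reduction to a common denominator both numerator and denominator vanish, the numerator by virtue of $\lim_{s\to0}s\,\la(s+1)=\tfrac12$. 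L'Hospital's rule then applies; the denominator derivative reduces (only the surviving $\sin\frac\pi2(2m-1)=(-1)^{m-1}$ term contributing, since now $\cos\frac\pi2(2m-1)=0$) to $(-1)^m2\pi(2m-2)!^2$, and inserting the expansion $\la\hs(\a-2m+2)=-\la(\a-2m+2)/(\a-2m+1)+(\g+\log2)\la(\a-2m+2)+O(\a-2m+1)$ one finds, after the $1/(\a-2m+1)$ singularities cancel between the $\cos$- and $\sin$-terms and via $\psi(2m-1)+\g+\log2=H_{2m-2}+\log2$,
\[
L_{2m-1}=\frac{(-1)^m x^{2m-2}}{2(2m-2)!}\Big(\log\frac x2-H_{2m-2}\Big).
\]

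With $L_{2m-1}$ in hand I would write the series as the finite part $\sum_{k=0}^{m-2}$, plus $L_{2m-1}$, plus the tail $\sum_{k\ge m}$, convert the tail through $\la(s)=(1-2^{-s})\z(s)$, \eqref{z1-2n} and \eqref{pochhammer} into the two even-power series $\sum_{k\ge1}\z(2k)(2k)_{2m-1}^{-1}(x/2\pi)^{2k}$ and $\sum_{k\ge1}\z(2k)(2k)_{2m-1}^{-1}(x/\pi)^{2k}$, and feed these to Theorem~\ref{th-choi-srivastava} with $n=2m-2$, $a=1$, at $t=\pm\frac x{2\pi}$ and $t=\pm\frac x\pi$. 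The point worth flagging here, in contrast to the odd case \eqref{cf-sin-cos-odd-1}, is that $n=2m-2$ is even, so the factor $(-t)^{n+k}$ flips sign exactly on the odd-$k$ terms; hence one \emph{adds} the $\pm t$ instances to isolate the even powers, the last term of \eqref{choi-srivastava} drops out, and the correction sum collapses onto even $k$, evaluated through $\z(-2n)=0$, $\z(0)=-\tfrac12$ and \eqref{der-zeta-even}.

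The step I expect to be the genuine obstacle is the closing reassembly, and within it the logarithmic bookkeeping. The finite part $\sum_{k=0}^{m-2}\frac{(-1)^k\la(2m-1-2k)}{(2k)!}x^{2k}$ must annihilate against the Choi--Srivastava correction sum, and---more delicately---the logarithm $\log\frac x2$ carried by $L_{2m-1}$ must be reconciled with the power--logarithm terms generated whenever \eqref{hurwitz-a-a+1} is used to pass between the Hurwitz arguments $\frac x{2\pi}$ and $1+\frac x{2\pi}$ (and likewise for $\frac x\pi$). Getting this balance exactly right is precisely the point at which the averaging computation of the first paragraph is the safest check, since it fixes unambiguously which Hurwitz arguments, and which residual logarithms if any, the final identity must carry; I would therefore cross-validate the direct computation against it before declaring \eqref{cf-sin-cos-odd-2} established.
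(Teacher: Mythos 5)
Your averaging route (the even-$n$ terms cancel, so the series is half the sum of \eqref{lim-value1-2} and \eqref{cf-cos-al}) is sound, and it is genuinely different from the paper's proof, which follows exactly your second and third paragraphs: your limit $L_{2m-1}$ is the paper's \eqref{lim-value3}, and the tail is fed to Theorem \ref{th-choi-srivastava} with $n=2m-2$, $a=1$, $t=\pm\frac x{2\pi}$ and $t=\pm\frac x\pi$. But the discrepancy you set aside as ``the Hurwitz-argument shift governed by \eqref{hurwitz-a-a+1}'' is not cosmetic, because that shift changes the value: differentiating \eqref{hurwitz-a-a+1} in $s$ gives $\z\hs(2-2m,a)=\z\hs(2-2m,1+a)-a^{2m-2}\log a$. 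Your average works out to
\begin{align*}
\sum_{n=1}^\infty\frac{\cos(2n-1)x}{(2n-1)^{2m-1}}
&=\frac{(-1)^{m-1}\pi^{2m-2}}{2(2m-2)!}\bigg(2^{2m-1}\z\hs\Big(2-2m,1-\frac x{2\pi}\Big)+2^{2m-1}\z\hs\Big(2-2m,\frac x{2\pi}\Big)\\
&\hskip2.5cm-\z\hs\Big(2-2m,1-\frac x\pi\Big)-\z\hs\Big(2-2m,\frac x\pi\Big)\bigg),
\end{align*}
and its right-hand side minus the right-hand side of \eqref{cf-sin-cos-odd-2} equals
\begin{equation*}
\frac{(-1)^{m-1}x^{2m-2}}{2(2m-2)!}\,\log\frac{4\pi}x\;\neq\;0,\qquad 0<x<\pi .
\end{equation*}
So your computation and the printed statement cannot both be right, and the cross-validation you postponed is not a safety check but the crux of the matter.

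Carrying it out decides against the printed statement. For $m=1$, Lerch's formula $\z\hs(0,a)=\log\G(a)-\frac12\log2\pi$ and the reflection formula \eqref{euler-refl} collapse your averaged expression to $\frac12\log\ctg\frac x2$, the classical value of $\sum_{n=1}^\infty\frac{\cos(2n-1)x}{2n-1}$, whereas the right-hand side of \eqref{cf-sin-cos-odd-2} collapses to $\frac12\log\big(\frac x{4\pi}\ctg\frac x2\big)$; at $x=\frac\pi2$ the series vanishes term by term while \eqref{cf-sin-cos-odd-2} gives $\frac12\log\frac18$. Hence the theorem as printed is false: the second and fourth Hurwitz arguments must be $\frac x{2\pi}$ and $\frac x\pi$, not $1+\frac x{2\pi}$ and $1+\frac x\pi$, exactly parallel to \eqref{cf-sin-cos-odd-1}; equivalently, one may keep the shifted arguments only together with the explicit logarithmic correction displayed above. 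The slip in the paper is traceable: in the preceding theorem the last step uses \eqref{hurwitz-a-a+1} to trade the arguments $1+t$ for $t$, absorbing the $\log\frac x2$ term of \eqref{lim-value2}, while here the printed formula keeps $1+t$ yet the corresponding logarithms, including the $\log\frac x2-H_{2m-2}$ carried by your $L_{2m-1}$, were never reinstated. So your proposal, once you close the loop you yourself flagged, does not prove \eqref{cf-sin-cos-odd-2} --- it refutes it, and your averaging argument is a correct, and much shorter, proof of the corrected identity.
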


\begin{proof}  After taking the limit $\a\to2m-1$ in the second formula of \eqref{polylogarithm3-1-2}, acting in the same manner as in the proof of the previous theorem, we first find
\begin{align}
\sum_{n=1}^\infty&\frac{\cos (2n-1)x}{(2n-1)^{2m-1}}
=\frac{(-1)^{m}x^{2m-2}}{2(2m-2)!}\Big(\log\frac x2-H_{2m-2}\Big)\label{lim-value3}\\
&+\sum_{k=0}^{m-2}\frac{(-1)^k\la(2m-2k-1)}{(2k)!}\,x^{2k}
+\sum_{k=m}^\infty\frac{(-1)^k\la(2m-2k-1)}{(2k)!}\,x^{2k}.\notag
\end{align}
We deal again with the remainder and rewrite it as follows
$$
(-1)^{m-1}x^{2m-2}\bigg(2\sum_{k=1}^\infty\frac{\z(2k)}{(2k)_{2m-1}}\,\Big(\frac x{2\pi}\Big)^{2k}
-\sum_{k=1}^\infty\frac{\z(2k)}{(2k)_{2m-1}}\,\Big(\frac x\pi\Big)^{2k}\bigg).
$$
Employing a similar procedure as in the case of the preceding theorem, we come to the closed-form formula
\eqref{cf-sin-cos-odd-2}. \end{proof}

Alternating series related to \eqref{polylogarithm3-1-2}
can be expressed as a power series involving Dirichlet's beta function
\begin{align}
\sum_{n=1}^\infty\frac{(-1)^{n-1}\sin(2n-1)x}{(2n-1)^\a}
&=\sum_{k=0}^\infty\frac{(-1)^k\b(\a-2k-1)}{(2k+1)!}x^{2k+1},\label{polylogarithm4}\\
\hskip-1cm\sum_{n=1}^\infty\frac{(-1)^{n-1}\cos(2n-1)x}{(2n-1)^\a}
&=\sum_{k=0}^\infty\frac{(-1)^k\b(\a-2k)}{(2k)!}x^{2k},\quad-\tfrac\pi2<x<\tfrac\pi2.\notag
\end{align}

\begin{theorem} If $\a$ is replaced with $2m-1$ in the first formula of \eqref{polylogarithm4},
we obtain the following closed form
\begin{equation}\label{swap-sin-cos-1}
\begin{split}
&\hskip-0.3cm\sum_{n=1}^\infty\frac{(-1)^{n-1}\sin(2n-1)x}{(2n-1)^{2m-1}}=\frac{(-1)^{m-1}(2\pi)^{2m-2}}{2(2m-2)!}
\Big(\z{\hskip0.15mm}'\Big(2-2m,\frac14-\frac x{2\pi}\Big)\\
&-\z{\hskip0.15mm}'\Big(2-2m,\frac34-\frac x{2\pi}\Big)
-\z{\hskip0.15mm}'\Big(2-2m,\frac14+\frac x{2\pi}\Big)+\z{\hskip0.15mm}'\Big(2-2m,\frac34+\frac x{2\pi}\Big)\Big),
\end{split}
\end{equation}
\end{theorem}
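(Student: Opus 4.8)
The plan is to follow the method used for \eqref{cf-sin-al} and \eqref{cf-cos-al}, because the Dirichlet $\b$ function, by \eqref{beta-ext}, is entire, exactly like $\eta$. First I would observe that the substitution $\a=2m-1$ in the first formula of \eqref{polylogarithm4} is legitimate: that formula carries no prefactor of the type appearing in \eqref{polylogarithm1}, and $\b$ has no pole, so no singularity is met. This produces
\[
\sum_{n=1}^\infty\frac{(-1)^{n-1}\sin(2n-1)x}{(2n-1)^{2m-1}}
=\sum_{k=0}^\infty\frac{(-1)^k\b(2m-2-2k)}{(2k+1)!}x^{2k+1},
\]
and I would split the right-hand side into the finite part $k=0,\dots,m-2$ (whose arguments $2m-2,\dots,2$ are positive even integers), the single term $k=m-1$ giving $\b(0)=\tfrac12$, and the tail $k\ge m$, whose arguments are negative even integers.

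For the tail I would first record the $\b$-counterparts of \eqref{z1-2n} and \eqref{der-zeta-even}. Putting $s=-2n$ in \eqref{beta-z} gives $\b(-2n)=\frac{(-1)^n2^{2n+1}(2n)!}{\pi^{2n+1}}\b(2n+1)$, and differentiating \eqref{beta-z} at $s=1-2j$, where $\cos\frac{\pi s}2$ vanishes so that only the derivative of the cosine survives, gives $\b{\hskip0.3mm}'(1-2j)=\frac{(-1)^{j+1}2^{2j-1}(2j-1)!}{\pi^{2j-1}}\b(2j)$. Combining the first relation with the decomposition $\b(s)=4^{-s}\big(\z(s,\tfrac14)-\z(s,\tfrac34)\big)$, which is immediate from \eqref{hurwitz-sum} upon separating the odd integers modulo $4$, then shifting the index and collecting Pochhammer's symbol through \eqref{pochhammer}, I would bring the tail to the shape
\[
(-1)^{m-1}(2\pi)^{2m-2}\sum_{n=1}^\infty\frac{\z(2n+1,\tfrac14)-\z(2n+1,\tfrac34)}{(2n+1)_{2m-1}}\Big(\frac x{2\pi}\Big)^{2n+2m-1}.
\]

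Next I would apply Theorem \ref{th-choi-srivastava} with $n=2m-2$ in \eqref{choi-srivastava}, taking $a=\tfrac14$ and then $a=\tfrac34$, and for each $a$ evaluating it at $t=\frac x{2\pi}$ and at $t=-\frac x{2\pi}$ and subtracting; since $n=2m-2$ is even, this subtraction kills the even-indexed terms and reproduces exactly the odd-indexed sum above. The hypothesis $|t|<|a|$ then reads $|x/2\pi|<\tfrac14$, i.e. $-\tfrac\pi2<x<\tfrac\pi2$, which is precisely the region of \eqref{polylogarithm4}. The leading terms $\z{\hskip0.15mm}'(2-2m,a\mp t)$ of \eqref{choi-srivastava} supply the four Hurwitz-zeta derivatives of \eqref{swap-sin-cos-1}, the pair from $a=\tfrac14$ entering with signs $(+,-)$ and the pair from $a=\tfrac34$ with $(-,+)$.

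The hard part will be to clear the polynomial-in-$x$ remainder of \eqref{choi-srivastava} against the finite part and the middle term. Forming the $a=\tfrac14$ minus $a=\tfrac34$ combination and using $\z(s,\tfrac14)-\z(s,\tfrac34)=4^{s}\b(s)$, the contributions built from $\z(k-2m+2,a)$ drop out because $\b$ vanishes at negative odd integers, while those built from $\z{\hskip0.3mm}'(k-2m+2,a)$ persist as $\b{\hskip0.3mm}'$ at negative odd integers and, by the differentiated functional equation recorded above, convert into $\b$ at the positive even integers $2,\dots,2m-2$, which cancel the finite part term by term. The remaining top power $x^{2m-1}$ cancels between $\b(0)=\tfrac12$ and the boundary term $(H_{2m-2}+\psi(a))\frac{t^{2m-1}}{(2m-1)!}$ of \eqref{choi-srivastava}, using $\psi(\tfrac14)-\psi(\tfrac34)=-\pi$ from the reflection formula. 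After this rearrangement the identity \eqref{swap-sin-cos-1} follows.
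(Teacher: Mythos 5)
Your proposal is correct and follows essentially the same route as the paper's own proof: the same three-way splitting of the series using $\b(0)=\tfrac12$, the functional equation \eqref{beta-z} (and its derivative) to convert $\b$ and $\b'$ at negative integers, the decomposition $\b(s)=4^{-s}\big(\z(s,\tfrac14)-\z(s,\tfrac34)\big)$, and Theorem \ref{th-choi-srivastava} with $n=2m-2$, $a=\tfrac14,\tfrac34$, $t=\pm\frac x{2\pi}$, with the identical cancellation pattern. The only cosmetic difference is that you obtain $\psi(\tfrac14)-\psi(\tfrac34)=-\pi$ from the reflection formula, whereas the paper derives it from $\psi^{(n)}(a)=(-1)^{n-1}n!\,\z(n+1,a)$ and $\b(1)=\tfrac\pi4$.
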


\begin{proof} These replacements are legitimate because we do not encounter singularities given the function $\b$ being analytic in the whole complex plane, knowing that $\b(1)=\arctg 1=\frac\pi4$ and by using \eqref{beta-ext}, we easily calculate
$$
\b(0)=\frac2\pi\sin\frac\pi2\G(1)\b(1)=\frac2\pi\cdot\frac\pi4=\frac12.
$$
Further, from \eqref{polylogarithm4}, we have
\begin{align*}
\sum_{n=1}^\infty\frac{(-1)^{n-1}\sin(2n-1)x}{(2n-1)^{2m-1}}
=\sum_{k=0}^\infty\frac{(-1)^k\b(2m-2k-2)}{(2k+1)!}x^{2k+1},
\end{align*}
splitting the right-hand side series in three, but writing it for brevity now as
\begin{equation*}
\sum_{k=0}^{m-2}\frac{(-x^2)^k x\b(2m-2k-2)}{(2k+1)!}-\frac{(-1)^{m}x^{2m-1}}{2(2m-1)!}
+\sum_{k=m}^\infty\frac{(-x^2)^k x\b(2m-2k-2)}{(2k+1)!}.
\end{equation*}
By shifting the summation index and making use of \eqref{beta-z}, the last series becomes
\begin{equation}\label{beta-series}
\sum_{k=m}^\infty\frac{(-x^2)^k x\b(2m-2k-2)}{(2k+1)!}=
(-1)^{m-1}x^{2m-2}\sum_{k=1}^\infty\frac{\b(2k+1)(\frac{2x}\pi)^{2k+1}}{(2k+1)_{2m-1}}.
\end{equation}
By expressing $\b(s)$ as a difference of two series
\begin{equation*}
\b(s)=\sum_{n=1}^\infty\frac{(-1)^{n-1}}{(2n-1)^s}=\sum_{n=0}^\infty\frac{1}{(4n+1)^s}-\sum_{n=0}^\infty\frac{1}{(4n+3)^s}
\end{equation*}
we can represent it through two values of Hurwitz's zeta function
\begin{equation}\label{beta-hurwitz}
\b(s)=4^{-s}\left(\z\big(s,\tfrac14\big)-\z\big(s,\tfrac34\big)\right).
\end{equation}
Taking account of \eqref{beta-hurwitz}, we further change \eqref{beta-series} to
\begin{equation}\label{dirichlet-beta}
(-1)^{m-1}x^{2m-2}\sum_{k=1}^\infty
\frac{\z\big(2k+1,\frac14\big)-\z\big(2k+1,\frac34\big)}{(2k+1)_{2m-1}}
\Big(\frac x{2\pi}\Big)^{2k+1}\end{equation}
That means we are dealing with two series over the Hurwitz zeta functions and set $n=2m-2$
and $t=\frac x{2\pi}$ in \eqref{choi-srivastava}, considering, apart from $t$, the same formula for $-t$ as well.

Replacing $a$ in succession with $\frac14$ and $\frac34$ in \eqref{choi-srivastava}, then subtracting these equalities, we obtain \eqref{dirichlet-beta} on the left-hand side. The right-hand side consists of a sum of four terms, the first one being
\begin{align*}
&\frac{(-1)^{m-1}(2\pi)^{2m-2}}{2(2m-2)!}
\Big(\z{\hskip0.15mm}'\Big(-2m+2,\frac14-\frac x{2\pi}\Big)-\z{\hskip0.15mm}'\Big(-{2m+2},\frac34-\frac x{2\pi}\Big)\\
&-\z{\hskip0.15mm}'\Big(-2m+2,\frac14+\frac x{2\pi}\Big)+\z{\hskip0.15mm}'\Big(-{2m+2},\frac34+\frac x{2\pi}\Big)\Big)
\end{align*}
then of two sums
\begin{equation}\label{two-sums}
\begin{split}
&\hskip-0.2cm(-1)^{m}\Big(\frac\pi2\Big)^{2m-2}\sum_{k=1}^{m-1}\!\Big(\frac {2x}{\pi}\Big)^{2k-1}\!\!
\frac{H_{2m-2}-H_{2m-2k-1}}{(2m-2k-1)!(2k-1)!}\b(2k-2m+1)\\
&\hskip-0.2cm+\frac{(-1)^{m}(2\pi)^{2m-2}}{(2m-2)!}\sum_{k=1}^{m-1}\!\Big(\frac x{2\pi}\Big)^{2k-1}\!
\binom{2m-2}{2k-1}4^{2k-2m+1}\b'(2k-2m+1)
\end{split}
\end{equation}
and finally of
\begin{equation}\label{psi-14-34}
\big(\psi(\tfrac14)-\psi(\tfrac34)\big)\frac{(-1)^{m-1}(2\pi)^{2m-2}}{(2m-1)!}\Big(\frac x{2\pi}\Big)^{2m-1}.
\end{equation}
For $k=1,\dots,m-1$ the expressions $2k-2m+1$ present negative odd integers, which means $\b(2k-2m+1)=0$,
so the first sum in \eqref{two-sums} equals zero.

Differentiating the relation \eqref{beta-z} at $z=2k-2m+1$, for $1\leqslant k\leqslant m-1$, yields
$$
\b'(2k-2m+1)=-\Big(\frac\pi2\Big)^{2k-2m+1}(-1)^{k-m}\G(2m-2k)\b(2m-2k),
$$
whereby the second sum in \eqref{two-sums} becomes
$$
-\sum_{k=1}^{m-1}\frac{(-1)^{k-1}\b(2m-2k)}{(2k-1)!}
=-\sum_{k=0}^{m-2}\frac{(-1)^{k}\b(2m-2k-2)}{(2k+1)!}.
$$

The expression \eqref{psi-14-34} is transformed into
$$
-4\b(1)\frac{(-1)^{m-1}(2\pi)^{2m-2}}{(2m-1)!}\Big(\frac x{2\pi}\Big)^{2m-1}
=4\cdot\frac\pi4\frac{(-1)^{m}x^{2m-1}}{2\pi(2m-1)!}=\frac{(-1)^{m}x^{2m-1}}{2(2m-1)!},
$$
and this is obtained by setting $n=1$ in the relation
\begin{equation*}
\b(n)=\frac{(-1)^n}{2^{2n}(2n-1)!}\big(\psi^{(n-1)}\big({\tfrac14}\big)-\psi^{(n-1)}\big(\tfrac34\big)\big),\quad n\in\mathbb N,
\end{equation*}
which in turn we get form \eqref{beta-hurwitz} by relying on the identity \cite{bateman}
\begin{equation*}
\psi^{(n)}(a)=(-1)^{n-1}n!\z(n+1,a).
\end{equation*}

Collecting all the cases, we arrive at the closed-form formula \eqref{swap-sin-cos-1}. \end{proof}

\begin{theorem} If $\a$ is replaced with $2m$ in the second formula of \eqref{polylogarithm4}, we  obtain the following closed forms
\begin{equation}\label{swap-sin-cos-2}
\begin{split}
&\hskip-0.3cm\sum_{n=1}^\infty\frac{(-1)^{n-1}\cos(2n-1)x}{(2n-1)^{2m}}=\frac{(-1)^{m-1}(2\pi)^{2m-1}}{2(2m-1)!}
\Big(\z{\hskip0.15mm}'\Big(1-2m,\frac14-\frac x{2\pi}\Big)\\
&-\z{\hskip0.15mm}'\Big(1-2m,\frac34-\frac x{2\pi}\Big)
+\z{\hskip0.15mm}'\Big(1-2m,\frac14+\frac x{2\pi}\Big)-\z{\hskip0.15mm}'\Big(1-2m,\frac34+\frac x{2\pi}\Big)\Big).
\end{split}
\end{equation}
\end{theorem}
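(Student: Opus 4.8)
The plan is to run the same machinery as in the preceding theorem, now for the even exponent $2m$. Since the functional equation \eqref{beta-ext} makes $\b$ entire, and since the coefficient $c=0$ in the beta row of Table I removes any singular prefactor, I may substitute $\a=2m$ directly into the second formula of \eqref{polylogarithm4}, obtaining $\sum_{n\ge1}(-1)^{n-1}\cos(2n-1)x\,(2n-1)^{-2m}=\sum_{k\ge0}(-1)^k\b(2m-2k)x^{2k}/(2k)!$, with $\b(0)=\tfrac12$ as computed before. I first split this series into the polynomial block $k=0,\dots,m-1$ (carrying $\b(2m),\b(2m-2),\dots,\b(2)$), the distinguished term $k=m$ contributing $\b(0)=\tfrac12$, and the tail $k\ge m+1$.

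Next I would reshape the tail. For $k\ge m+1$ the argument $2m-2k$ is a negative even integer, so \eqref{beta-z} rewrites each $\b(2m-2k)$ through $\b$ at a positive odd integer; after shifting the index and inserting \eqref{beta-hurwitz}, the $4^{-s}$ there converting $(2x/\pi)$ into $x/(2\pi)$, the tail becomes
$$
(-1)^{m}(2\pi)^{2m-1}\sum_{l=1}^\infty\frac{\z(2l+1,\tfrac14)-\z(2l+1,\tfrac34)}{(2l+1)_{2m}}\Big(\frac{x}{2\pi}\Big)^{2m+2l},
$$
a difference over $a\in\{\tfrac14,\tfrac34\}$ of odd-index Hurwitz series in $t=\tfrac x{2\pi}$.

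Now I apply Theorem \ref{th-choi-srivastava} with $n=2m-1$ and $a=\tfrac14$, then $a=\tfrac34$, each evaluated at $t$ and at $-t$. Because $n$ is odd here, it is the \emph{symmetric} combination (the sum of the $t$ and $-t$ instances) that retains exactly the odd indices $2l+1$, while the difference in $a$ assembles the beta structure; this is the decisive difference from the preceding proof, where $n=2m-2$ was even and the antisymmetric combination was used. The leading terms $\z\hs(-n,a\mp t)$ of \eqref{choi-srivastava} then collapse, carrying the prefactor $(-1)^{m}(2\pi)^{2m-1}\cdot\tfrac12\cdot(-1)^n/n!=(-1)^{m-1}(2\pi)^{2m-1}/(2(2m-1)!)$, precisely into the four derivatives $\z\hs(1-2m,\tfrac14\pm t)$ and $\z\hs(1-2m,\tfrac34\pm t)$ of \eqref{swap-sin-cos-2}; the symmetric choice in $t$ is exactly what produces the sign pattern $(+,-,+,-)$ there, in contrast with the $(+,-,-,+)$ of \eqref{swap-sin-cos-1}.

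It then remains to show that the residual Choi--Srivastava terms annihilate the polynomial block and the distinguished term. As before, the harmonic-number part of the finite sum vanishes because $\b$ at negative odd integers is zero by \eqref{beta-z}. The constant $-\z\hs(-n,a)$ together with the $\z\hs(k-n,a)$ entries of the finite sum—only the even $k$ surviving symmetrization—are converted, on differentiating \eqref{beta-z} and reusing $\b'(2k-2m+1)=-(\tfrac\pi2)^{2k-2m+1}(-1)^{k-m}\G(2m-2k)\b(2m-2k)$, into exactly the polynomial block with opposite sign, so they cancel. Finally the term $(H_n+\psi(a))t^{n+1}/(n+1)!$ reduces under the $a$-difference to $\psi(\tfrac14)-\psi(\tfrac34)=-\pi=-4\b(1)$, and a one-line computation shows its overall contribution $(-1)^{m+1}x^{2m}/(2(2m)!)$ exactly cancels the distinguished $\b(0)=\tfrac12$ term at $k=m$. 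I expect this last step to be the main obstacle: the work is entirely in keeping the $\pm t$ parity, the binomial coefficients, and the signs aligned so that the polynomial block and the $\b(0)$ term disappear, leaving only the four Hurwitz-zeta derivatives of \eqref{swap-sin-cos-2}.
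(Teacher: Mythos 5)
Your proposal is correct and is essentially the paper's own proof: the paper settles this theorem in a single line by instructing the reader to act exactly as in deducing \eqref{swap-sin-cos-1}, and your write-up is that argument carried out in detail for $n=2m-1$, with all the parity adjustments done right — the symmetric $\pm t$ combination isolating the odd Hurwitz indices, the now-surviving constant term $-\z\hs(1-2m,a)$ supplying the cancellation of the $k=0$ term $\b(2m)$ of the polynomial block, and the $\psi(\tfrac14)-\psi(\tfrac34)=-4\b(1)$ term killing the $\b(0)$ contribution at $k=m$. I verified the tail reduction, the prefactor $(-1)^{m-1}(2\pi)^{2m-1}/(2(2m-1)!)$, the sign pattern $(+,-,+,-)$, and both cancellations; they all check out.
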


\begin{proof} Acting in the same manner as in deducing the formula \eqref{swap-sin-cos-1},
we come to the formula \eqref{swap-sin-cos-2}. \end{proof}

\section{Appendix}

General closed-form formula type based on the formulas \eqref{lim-value1}, \eqref{lim-value1-2}, \eqref{cf-sin-al}, \eqref{cf-cos-al}, \eqref{cf-sin-cos-odd-1}, \eqref{cf-sin-cos-odd-2}, \eqref{swap-sin-cos-1} and \eqref{swap-sin-cos-2} is
\begin{align*}
\sum_{n=1}^\infty&\frac{(s)^{n-1}f((an-b)x)}{(an-b)^{2m+p-1}}
=\frac{(-1)^{m+p-1}\pi^{2m+p-2}2^{2m+r-2}}{(2m+p-2)!}\times\\
&\bigg(2^{2k-2}\z{\hskip0.15mm}'\big(2-p-2m,q-\frac x{2\pi}\big)+2^{2k-2}\d\z{\hskip0.15mm}'\big(2-p-2m,1-q+\frac x{2\pi}\Big)\\
&-j\Big(\z{\hskip0.15mm}'\big(2-p-2m,1-q-\frac x{2c\pi}\big)+\d\z{\hskip0.15mm}'\big(2-p-2m,q+\frac x{2c\pi}\big)\Big)\bigg),
\end{align*}
and one can obtain all its particular cases from Table II by choosing the corresponding parameters.
{\tabcolsep 4pt
\begin{center}
\begin{tabular}{|c|c|c|c|c|c|c|c|c|c|c|c|} \multicolumn{12}{c}
{Table II: Closed-form cases, $\a=2m+p-1$}\\ \hline
 $a$ & $b$ & $s$ & $f$ & $p$ & $r$ & $c$ & $\d$ & $q$ & $k$ & $j$ & Convergence region\\ \hline
     &   & & $\sin$ & 1 & 1 &  & -1 & $1$ & 1 & 0 & \\ \cline{4-11}
     &   & \raisebox{1.5ex}[0pt]{$1$} & $\cos$ & 0 & 0 &  & $1$ & $1$ & 1 & 0 & \raisebox{1.5ex}[0pt]{$0<x<2\pi$}\\ \cline{3-12}
   &   &  & $\sin$ & $1$ & $2-2m$ & -$\frac12$ & -1 & $1$ & $m+\frac12$ & -1 & \\ \cline{4-11}
 \raisebox{4.5ex}[0pt]{$1$} & \raisebox{4.5ex}[0pt]{$0$} & \raisebox{1.5ex}[0pt]{-1} & $\cos$ & $0$ & $2-2m$ & -$\frac12$ & $1$ & $1$ & $m$ & 1 & \raisebox{1.5ex}[0pt]{$-\pi<x<\pi$}\\ \hline
 &   &    & $\sin$ & $1$ & $1-2m$ & -$\frac12$ & -1 & $1$ & $m+1$ & -1 & \\ \cline{4-11}
   &   &  \raisebox{1.5ex}[0pt]{1}& $\cos$ & 0 & $1-2m$ & -$\frac12$ & 1 & 1 & $m+\frac12$ & 1 & \raisebox{1.5ex}[0pt]{$0<x<\pi$}\\ \cline{3-12}
   &   &    & $\sin$ & 0 & -1 & 1 & 1 & $\frac14$ & $1$ & 1 & \\ \cline{4-11}
 \raisebox{4.5ex}[0pt]{$2$} & \raisebox{4.5ex}[0pt]{$1$} & \raisebox{1.5ex}[0pt]{-1}& $\cos$ & 1 & 0 & 1 & -1 & $\frac14$ & 1 & -1 & \raisebox{1.5ex}[0pt]{$-\frac\pi 2<x<\frac\pi2$}\\ \hline
\end{tabular}
\end{center}}

\end{document}